\documentclass[11pt, reqno]{amsart}

\usepackage{geometry}
\setlength{\topskip}{\ht\strutbox} 

\geometry{paper=a4paper,left=32mm,right=32mm,top=35mm}

\usepackage{color}
\usepackage{amssymb}
\usepackage{amsmath}
\usepackage{arydshln}
\usepackage{hyperref}
\usepackage{bbm}
\usepackage{blindtext}
\blindmathtrue
\usepackage{mathrsfs}
\usepackage{enumerate}

\usepackage{graphicx}
\usepackage{subfig}
\numberwithin{equation}{section}

\newcommand{\R}{\mathbb{R}}

\newcommand{\N}{\mathbb{N}}

\renewcommand{\P}{\mathbb{P}}
\newcommand{\E}{\mathbb{E}}

\newcommand{\cB}{\mathcal{B}}

\newcommand{\cU}{\mathcal{U}}
\newcommand{\cX}{\mathcal{X}}

\newtheorem{Theorem}{Theorem}[section]
\newtheorem{Proposition}[Theorem]{Proposition}
\newtheorem{Corollary}[Theorem]{Corollary}

\newtheorem{Remark}[Theorem]{Remark}
\newtheorem{Definition}[Theorem]{Definition}

\begin{document}

\title{Wasserstein distance in terms of the comonotonicity Copula}
\author{Mariem Abdellatif}
\address[Mariem Abdellatif]{School of Mathematics and Natural Sciences\\ University of Wuppertal, Germany}
\email[Mariem Abdellatif]{abdellatif@uni-wuppertal.de}

\author{Peter Kuchling}
\address[Peter Kuchling]{School of Mathematics and Natural Sciences\\ University of Wuppertal, Germany}
\email[Peter Kuchling]{kuchling@uni-wuppertal.de}

\author{Barbara R\"udiger}
\address[Barbara R\"udiger]{School of Mathematics and Natural Sciences\\ University of Wuppertal, Germany}
\email[Barbara R\"udiger]{ruediger@uni-wuppertal.de}

\author{Irene Ventura}
\address[Irene Ventura]{University of Trento, Italy}
\email[Irene Ventura]{irene.ventura@studenti.unitn.it}

\date{\today}
\subjclass[2020]{Primary 62H05, 60B10; Secondary 28A33, 46E27}
\keywords{}

\begin{abstract}
In this article, we represent the Wasserstein metric of order $p$, where $p\in [1,\infty)$, in terms of the comonotonicity copula, for the case of probability measures on $\R^d$, by revisiting existing results.
 In \cite{vallander_ru}, Vallender established the link between the $1$-Wasserstein metric and the corresponding distribution functions for $d=1$. In \cite{DALL’AGLIO} Giorgio dall'Aglio showed that the $p$-Wasserstein metric in $d=1$ could be written in terms of the comonotonicity copula $M$ without being aware of the concept of copulas or Wasserstein metrics. In this article,
 for the proofs we explicitly combine tools from copula theory and Wasserstein metrics. The extension to general $d\in\N$ has some restriction, as discussed e.g. in \cite{Alfonsi} and \cite{BDS}. Some of the results of \cite{Alfonsi}, \cite{BDS} and \cite{RR} are revisited here in a more explicit form in terms of the comonotonicity copula.
\end{abstract}

\keywords{Copula, Wasserstein distance, Wasserstein space, comonotonicity}

\maketitle
\allowdisplaybreaks

\section{General introduction}
Wasserstein distances or Kantorovich–Rubinstein distances have been introduced first by Leonid Kantorovich in $1939$ \cite{MR129016} and they are used in many areas of pure and applied mathematics. The concept of Wasserstein distance is motivated by the concept of optimal transportation and it is based on finding an appropriate coupling between two marginal probability measures. Indeed, the optimal transport cost between two probability measures $\mu$ and $\nu$ on a set $\mathcal{X}$ is defined by 
\begin{align}\label{optimal transport cost}
    C(\mu,\nu)=\inf_{\pi \in \Pi(\mu,\nu)} \int_{\mathcal{X}\times\mathcal{X}} c(x,y) \pi(dx,dy),
\end{align}
where $c(x,y)$ is the cost for transporting one unit of mass from $x$ to $y$, $\Pi(\mu,\nu)$ is the set of all couplings between $\mu$ and $\nu$, i.e. the set of all probability measures on $\mathcal{X}\times\mathcal{X}$ with margins $\mu$ and $\nu$. In fact, when the cost is defined in terms of a distance on a Polish space $(\cX,d)$, then one can prove that \eqref{optimal transport cost} actually defines a distance, see e.g. \cite{villani_topics}. 

More precisely, let $\mu,\nu$ be two probability measures on a Polish space $(\mathcal{X},d)$, then the Wasserstein distance $ W_{p}$  of order $p\in [1,\infty)$ is defined by the following formula:
\begin{align}\label{Wm}
    W_{p}^{p}(\mu,\nu)=\inf_{\pi \in \Pi(\mu,\nu)} \int_{\mathcal{X}} d(x,y)^{p} \pi(dx, dy).
\end{align}
The Wasserstein space of order $p$ is the space of probability measures which have a finite moment of order $p$ and is defined as 
\begin{align*}
     P_{p}(\mathcal{X}):=\{\mu \in P(\mathcal{X}); \int_{\mathcal{X}} d(x_{0},x)^{p} \mu(dx)<\infty\},
\end{align*}
where $x_{0} \in \mathcal{X}$ is arbitrary and $P(\mathcal{X})$ is the set of all probability measures on $\mathcal{X}$. It turns out that for any $p\in[1,\infty)$, $W_p$ defines a metric on $P_p(\cX)$ \cite[Theorem 7.3]{villani_topics}.

The Wasserstein distance provides a meaningful and smooth representation of the distance between distributions. Furthermore, it is related to the notion of weak convergence of measures (see e.g. \cite[Section 6]{Villani}) and it has various applications in stochastic analysis, especially in ergodicity theory, see for example 
\cite{Friesen,MR4241464,MR4153590,arxiv.2301.05640,arxiv.2301.05120} for applications.
Remark that the following theorem guarantees that the infimum  in \eqref{Wm} is reached by some optimal coupling.
\begin{Theorem}[{Existence of optimal coupling, \cite[Thm. 4.1]{Villani}}]\label{Existence of optimal coupling}
Let $(\cX,d)$ be a Polish space and $\mu,\nu \in P_{p}(\cX)$. Then there exists a coupling $\pi \in \Pi(\mu,\nu)$ such that
\begin{align*}
   W_{p}^{p}(\mu,\nu)= \int_{\cX\times\cX} d(x,y)^{p} \pi(dx, dy).
\end{align*}
\end{Theorem}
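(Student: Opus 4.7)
The plan is to apply the direct method of the calculus of variations: take a minimizing sequence of couplings, extract a weakly convergent subsequence via Prokhorov's theorem, and exploit lower semicontinuity of the cost functional to conclude that the weak limit attains the infimum. Before starting, I would check that the infimum $W_p^p(\mu,\nu)$ is finite, so that a minimizing sequence exists at all. This follows because $\mu\otimes\nu \in \Pi(\mu,\nu)$, and the elementary inequality $d(x,y)^p \le 2^{p-1}\bigl(d(x_0,x)^p + d(x_0,y)^p\bigr)$ combined with the assumption $\mu,\nu \in P_p(\cX)$ yields $\int d^p\, d(\mu\otimes\nu) < \infty$.

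First I would establish \emph{tightness} of $\Pi(\mu,\nu)$. Since $(\cX,d)$ is Polish, Ulam's theorem guarantees that the singletons $\{\mu\}$ and $\{\nu\}$ are individually tight: for any $\e > 0$ there exist compact sets $K_1, K_2 \subset \cX$ with $\mu(\cX\setminus K_1), \nu(\cX\setminus K_2) < \e/2$. For every $\pi \in \Pi(\mu,\nu)$ the product $K_1\times K_2$ is compact in $\cX\times\cX$, and the marginal constraints give
\begin{align*}
\pi\bigl((\cX\times\cX)\setminus(K_1\times K_2)\bigr) \le \mu(\cX\setminus K_1) + \nu(\cX\setminus K_2) < \e.
\end{align*}
By Prokhorov's theorem, $\Pi(\mu,\nu)$ is then relatively compact for weak convergence in $P(\cX\times\cX)$. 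Next, I would verify that $\Pi(\mu,\nu)$ is also weakly closed: if $\pi_n \to \pi$ weakly with $\pi_n \in \Pi(\mu,\nu)$, then testing against functions of the form $(x,y)\mapsto f(x)$ and $(x,y)\mapsto g(y)$ with $f,g \in C_b(\cX)$ identifies the marginals of $\pi$ as $\mu$ and $\nu$. Combined with relative compactness, this lets me extract from any minimizing sequence a subsequence $\pi_{n_k}$ converging weakly to some $\pi^* \in \Pi(\mu,\nu)$.

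The final and technically most delicate step will be the \emph{lower semicontinuity} of the functional $\pi \mapsto \int d(x,y)^p\, \pi(dx,dy)$ under weak convergence. Because $d^p$ is continuous and nonnegative but unbounded, the portmanteau theorem does not apply directly. The standard workaround is to truncate: the cutoff $d^p \wedge N$ is bounded continuous, so weak convergence gives
\begin{align*}
\int (d^p\wedge N)\, d\pi^* = \lim_{k\to\infty} \int (d^p\wedge N)\, d\pi_{n_k} \le \liminf_{k\to\infty} \int d^p\, d\pi_{n_k} = W_p^p(\mu,\nu).
\end{align*}
Letting $N\to\infty$ via monotone convergence yields $\int d^p\, d\pi^* \le W_p^p(\mu,\nu)$, and the reverse inequality is immediate from $\pi^* \in \Pi(\mu,\nu)$ and the definition of the infimum. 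Hence $\pi^*$ is the sought optimal coupling.

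The main obstacle is precisely this lower semicontinuity step: the unboundedness of $d^p$ forces the truncation-and-monotone-convergence trick, whereas tightness, weak closedness of $\Pi(\mu,\nu)$, and finiteness of a minimizing sequence are all soft consequences of the Polish and $P_p$ assumptions.
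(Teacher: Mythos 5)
Your argument is correct and complete: finiteness of the infimum via the product coupling, tightness of $\Pi(\mu,\nu)$ from Ulam's theorem and the marginal constraints, Prokhorov plus weak closedness to extract a limit coupling, and lower semicontinuity of the cost via truncation and monotone convergence. The paper itself gives no proof of this statement but imports it from \cite[Thm.~4.1]{Villani}, and your proposal is essentially the standard direct-method proof given there, so there is nothing further to compare.
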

We should hence expect that at least in some cases, this infimum can be identified by writing the couplings in terms of ``copulas''.
Copulas are functions that join or “couple” distribution functions to obtain multivariate distribution functions with prescribed marginals given by the distribution functions. They are used to describe and analyse dependence between random variables. The first explicit introduction was given by Abe Sklar in $1959$ \cite{Nelsen}. Giorgio dall'Aglio \cite{DALL’AGLIO} was the first to link the Wasserstein distance to copula, without being aware of the concept of Wasserstein metric or copula explicitly. This connection seems natural, as the Wasserstein distance itself is defined by minimizing over all couplings between two marginal distributions. While copulas are a commonly employed tool in actuarial sciences to analyse and simulate dependence structures of risks \cite{denuit2005actuarial,MR1968943,MR2244349,MR3445371}, its uses in pure mathematics have often been overlooked.\\
In \cite{vallander_ru}, Vallender established the link between the $1$-Wasserstein metric and the distribution functions.

One aim of this article is to reformulate the results from \cite{DALL’AGLIO} and \cite{vallander_ru} in terms of copula explicitly. This is especially reflected in the proof of our first main theorem (Theorem  \ref{main theorem}), where we employ copula theory to first express the Wasserstein distance on $\R$ in terms of copula, and then rewrite the expression to regain a classical result in terms of generalized inverse distribution functions. Remark that the copula employed is the comotonicity copula, which is a maximizer  for all copulas according to the Theorem of Fr\'echet-Hoeffding, recalled in Section \ref{sec:prelim}.

Our second aim is to rewrite the result to the case of $\R^d$ obtained in \cite{Alfonsi}, \cite{BDS} and \cite{RR} explicitly for the case where the underlying common copula is the comotonicity copula. In \cite{RR}, a general characterization of the optimal coupling of the Wasserstein distance was given in terms of convex analysis, cf. \cite[Theorem 3.2.9 and Section 3.3]{RR}, while also providing the special case $d=1$ as an example (\cite[Theorem 3.1.2 and Example 3.2.14]{RR}). As discussed in \cite{Alfonsi} and \cite{BDS} there are restrictions for which the infimum in the Wasserstein metrics \eqref{Wm} can be identified in terms of copula, according to Theorem \ref{Existence of optimal coupling}. It turns out that only in the restricted case where both measures share the same dependence structure, the Wasserstein distance can be expressed in terms of the shared copula as well as the margins. In other words, for the result in \cite{Alfonsi} it is necessary that the probability measures $\mu$ and $\nu$ share the same copula $C$. For the explicit result, in this article we assume that the two probability measures share the same copula $M$ which will be defined below. To obtain an explicit representation, the coupling needed is again the copula which serves as the minimizer in the case $d=1$, which is exactly the comonotonicity copula $M$. To emphasize the importance of the comonotonicity copula, we give an explicit proof of our main theorem for the case that the underlying dependence structure is comonotone. Let us remark that various extensions to the concept of comonotonicity and their relation to optimality have been discussed in \cite{MR2557634}.

The article is structured as follows. In Section \ref{sec:main}, we formulate the main results of this article and compare our work to some previous important results done in \cite{BDS} and \cite{Alfonsi}. In Section \ref{sec:prelim}, we recall the notion of coupling as well as some classical results from probability theory on $\R^d$. To keep the article self-contained, we also give a basic introduction into the fundamental results of copula theory. Section \ref{sec:main_proofs} is devoted to the proofs of the main results. For completeness, we recall some proofs of more basic results in Appendix \ref{app:a}.


\section{Main results and discussion}\label{sec:main}

Our first result is the representation of the Wasserstein distance on $\R$ in terms of the comonotonicity copula. As it turns out, the optimal coupling is always given by the two-dimensional comonotonicity copula, called here also $M$-copula. Here and below, we denote by $M^d\colon [0,1]^d\to[0,1]$,
\begin{equation}\label{Comonotinicity copula}
 M^d(u_1,\dotsc,u_d)=\min\{u_1,\dotsc,u_d\}
\end{equation}
where we omit the dimension index when no confusion may arise. We will explore the notion of copula in more detail in Section \ref{sec:prelim}.

\begin{Theorem}[{Wasserstein distance in terms of copula in $\R$, \cite{vallander_ru},
\cite[Teorema I, Teorema IX]{DALL’AGLIO}, \cite[Proposition 2.1]{MR1240428}}]\label{main theorem}
Let $\mu, \nu$ be two probability measures in $P_{p}(\R)$. 
Let $F$ and $G$ be the associated distribution functions. Then for all $p\in [1,\infty)$,
\begin{align*}
    W_{p}^{p}(\mu,\nu)&=\int_{-\infty}^{\infty} \int_{-\infty}^{\infty} |x-y|^{p} dM(F(x),G(y))
    \\
    &= \int_{0}^{1} |F^{-1}(u)-G^{-1}(u)|^{p} du,
\end{align*}
where 
$F^{-1}$ and $G^{-1}$ are the generalized inverses (or the quantile function of $F$ and $G$) associated to $F$ and $G$ respectively and defined as 
\begin{align*}
    F^{-1}(u)=\inf\{x \in \R \colon F(x)\geq u\}, u \in [0,1],
\end{align*}
and 
\begin{align*}
    G^{-1}(u)=\inf\{x \in \R \colon G(x)\geq u\}, u \in [0,1],
\end{align*}
with the convention that $\inf \emptyset =\infty$.
\end{Theorem}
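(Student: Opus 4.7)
The plan is to establish both equalities by producing the comonotonic coupling $\pi_M \in \Pi(\mu,\nu)$ explicitly, computing its cost in terms of quantile functions, and then showing that it is optimal. First, by Sklar's theorem (to be recalled in Section~\ref{sec:prelim}), applied to the $2$-dimensional comonotonicity copula $M(u,v) = \min(u,v)$ with marginals $F$ and $G$, the function $H(x,y) := M(F(x), G(y))$ is a bivariate distribution function whose one-dimensional margins are exactly $F$ and $G$. Hence the associated Borel probability measure $\pi_M$ on $\R^2$ belongs to $\Pi(\mu,\nu)$.

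Next, I will realize $\pi_M$ concretely as the law of $\bigl(F^{-1}(U), G^{-1}(U)\bigr)$, where $U$ is uniformly distributed on $[0,1]$. Using the standard identity $\{F^{-1}(U) \leq x\} = \{U \leq F(x)\}$ (and similarly for $G^{-1}$), one checks that
\begin{align*}
\P\bigl(F^{-1}(U) \leq x,\, G^{-1}(U) \leq y\bigr) = \P\bigl(U \leq F(x) \wedge G(y)\bigr) = M(F(x), G(y)).
\end{align*}
Evaluating the cost under $\pi_M$ then yields
\begin{align*}
\int_{\R^2} |x-y|^p \, d\pi_M(x,y) = \E\bigl[|F^{-1}(U) - G^{-1}(U)|^p\bigr] = \int_0^1 |F^{-1}(u) - G^{-1}(u)|^p \, du,
\end{align*}
which simultaneously proves the equality between the two right-hand sides of the statement---finiteness is guaranteed by $\mu, \nu \in P_p(\R)$---and provides the upper bound $W_p^p(\mu,\nu) \leq \int_0^1 |F^{-1}(u) - G^{-1}(u)|^p \, du$.

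The remaining task, and the main obstacle, is the matching lower bound, namely the optimality of $\pi_M$. My tool of choice is the submodularity of $c(x,y) = |x-y|^p$ for $p \geq 1$: for $x_1 \leq x_2$ and $y_1 \leq y_2$,
\begin{align*}
|x_1 - y_1|^p + |x_2 - y_2|^p \leq |x_1 - y_2|^p + |x_2 - y_1|^p.
\end{align*}
Combined with a standard swapping / cyclical monotonicity argument, this forces any optimal coupling (whose existence is guaranteed by Theorem~\ref{Existence of optimal coupling}) to be supported on a nondecreasing subset of $\R^2$, a property that characterizes $\pi_M$. Equivalently, one may combine the Fr\'echet-Hoeffding upper bound $H(x,y) \leq M(F(x), G(y))$ with a Cambanis-Simons-Stout-type identity expressing $\int c\, d\pi$ as a linear functional of $H$ with non-positive mixed-partial kernel, so that the cost is minimized precisely when $H$ saturates its pointwise upper bound at $M(F,G)$. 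The delicate point is the regularity of $c$: for $p \in [1,2)$ the formal mixed derivative $c_{xy}$ is singular on the diagonal, so the argument requires either a regularization (e.g.\ $c_\e(x,y) = (|x-y|^2 + \e)^{p/2}$ with $\e \downarrow 0$, using the $p$-moment bound to pass to the limit) or, more cleanly, a direct Hardy-Littlewood rearrangement argument applied to $F^{-1}$ and $G^{-1}$.
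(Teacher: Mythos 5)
Your proposal is correct in outline and, in its second branch, essentially reproduces the paper's own argument. The first half --- constructing the comonotone coupling from $H(x,y)=M(F(x),G(y))$ via Sklar, realizing it as $(F^{-1}(U),G^{-1}(U))$, and thereby obtaining both the upper bound and the identity between the two displayed integrals --- is exactly what the paper does through Theorem \ref{Equivalent conditions comonotonicity} and Proposition \ref{comonotonicity result}. For the lower bound the paper does not use cyclical monotonicity or a rearrangement inequality; it uses precisely the ``Cambanis--Simons--Stout-type identity'' you offer as an alternative: for $p=1$, Vallender's first-order identity $\E|X-Y|=\int_{\R}[F(y)+G(y)-2H(y,y)]\,dy$, and for $p>1$, dall'Aglio's identity (Proposition \ref{First proposition by Giorgio dall'aglio}), which writes $I(H)$ as a decreasing linear functional of $H$ with kernel $p(p-1)|x-y|^{p-2}\geq 0$ multiplying $[G(y)-H(x,y)]$ and $[F(x)-H(x,y)]$; the Fr\'echet--Hoeffding upper bound $H\leq M(F,G)$ then identifies the minimizer at once. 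Two small corrections to your sketch: (i) for $p\in(1,2)$ no regularization is actually needed, since $|x-y|^{p-2}$ is locally integrable and dall'Aglio's identity holds as stated; the only genuinely degenerate case is $p=1$, where the second-order kernel vanishes and one must fall back on the first-order (Vallender) identity, which is how the paper splits the proof into Proposition \ref{prop:p4.1} and Theorem \ref{First theorem by Giorgio dall'aglio}; (ii) for $p=1$ the cost is submodular but not strictly so, hence it is \emph{not} true that every optimal coupling is supported on a nondecreasing set --- only that the monotone coupling is among the optimizers, which is all the statement requires. With either branch carried out (and the $p=1$ case treated separately), your plan closes the argument.
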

\begin{Remark}
 \begin{enumerate}
     \item The representation via the generalized inverses on $\R$ is a well-known fact in optimal transport, see e.g. \cite[Theorem 2.18 and Remark 2.19]{villani_topics}.
     \item In \cite{vallander_ru}, the author considered only the case $p=1$, while \cite{MR1240428} discusses the case $p=2$. In \cite{DALL’AGLIO}, all cases $p=1$ and $p>1$ were considered. Nevertheless, the author also did not draw the connection to copula theory. We will revisit the methods of their proofs in Section \ref{sec:main_proofs} while combining it with the language of copulas.
     \item As the author also noticed in \cite{vallander_ru}, for $p=1$, the Wasserstein distance on $\R$ may even be expressed in terms of the distribution functions $F$ and $G$:
     \begin{displaymath}
      W_1(\mu,\nu)=\int_\R|F(x)-G(x)|dx.
     \end{displaymath}
 \end{enumerate}
\end{Remark}

Only for some restricted case, the result can be generalized to $\R^d$. This is discussed in \cite{Alfonsi} (see also \cite{BDS} and \cite{RR}). In fact, to obtain a representation of the optimal coupling using copulas and the marginals, it is required that both probability measures share the same dependence structure, i.e., the same copula. This was proven in the finite-dimensional case in \cite[Proposition 1.1]{Alfonsi} and \cite{RR} and was generalized to the infinite-dimensional case in \cite[Theorem 5]{BDS}. To obtain an explicit representation, we focus on the case that the underlying shared copula is the $d$-dimensional comonotonicity copula. Denote by $\|\cdot\|_p$ the $p$-norm on $\R^d$. Since all norms on $\R^d$ are equivalent, it suffices to consider the one with the same order as the Wasserstein distance in order to analyse convergence properties. Note though, that an explicit representation as given below is not possible if the order of the Wasserstein distance and the norm on $\R^d$ do not coincide, as was remarked in \cite[Proposition 1.1]{Alfonsi}.

Let $\mu,\nu\in P(\R^d)$ be two probability measures and $(F_1,\dotsc,F_d)$ and $(G_1,\dotsc,G_d)$ their marginal distribution functions. We say that $\mu$ and $\nu$ share the same copula if there exists a $d$-dimensional copula $C\colon[0,1]^d\to[0,1]$ such that the joint distribution functions $H_\mu$ and $H_\nu$ of $\mu$ and $\nu$ can be written as follows:
\begin{equation}\label{eq:shared_copula}
\begin{split}
 H_\mu(x_1,\dotsc,x_d)&=C(F_1(x_1),\dotsc,F_d(x_d))
 \\
 H_\nu(y_1,\dotsc,y_d)&=C(G_1(y_1),\dotsc,G_d(y_d)).
\end{split}
\end{equation}
For more details on the connection between distribution functions and copulas, see Section \ref{sec:prelim}. For such measures with \eqref{eq:shared_copula}, we may represent the Wasserstein distance in terms of copula. For another proof in the finite-dimensional case, see \cite{Alfonsi}. The following representation theorem was proven for the infinite-dimensional case in \cite{BDS}. For better comparison with our results, we formulate a finite-dimensional version here. It can also be found in \cite[Theorem 2.9]{MR1240428}.
\begin{Theorem}[{\cite[Theorem 5]{BDS}, \cite[Theorem 2.9]{MR1240428}}, formulation for finite-dim. case]\label{thm:bds}
Let $X,Y$ be random variables on $\R^d$. Then the following are equivalent:
\begin{enumerate}
    \item $X$ and $Y$ share the same copula $C$.
    \item The Wasserstein distance between $X$ and $Y$ is given by
    \begin{displaymath}
     W_p^p(X,Y)=\sum_{i=1}^dW_p^p(X_i,Y_i)
    \end{displaymath}
\end{enumerate}
In particular, if one of the above holds, we have
\begin{displaymath}
 \sum_{i=1}^dW_p^p(X_i,Y_i)=\int_{[0,1]^d}\sum_{i=1}^d|F_i^{-1}(u_i)-G_i^{-1}(u_i)|^pdC(u_1,\dotsc,u_d).
\end{displaymath}
\end{Theorem}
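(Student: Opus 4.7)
The plan is to establish the equivalence by proving a universal lower bound on $W_p^p(X,Y)$, then closing the gap via an explicit coupling built from the shared copula, and finally deducing the rigidity needed for the converse from the one-dimensional result (Theorem \ref{main theorem}). For (1)$\Rightarrow$(2), I would first observe that for any coupling $\pi$ of the laws of $X$ and $Y$, its projection $\pi_i$ onto the $i$-th coordinate pair is a coupling of $X_i$ and $Y_i$, so from $\|x-y\|_p^p=\sum_{i=1}^d|x_i-y_i|^p$ one gets
\begin{align*}
 \int_{\R^d\times\R^d}\|x-y\|_p^p\,d\pi(x,y)=\sum_{i=1}^d\int_{\R\times\R}|x_i-y_i|^p\,d\pi_i(x_i,y_i)\geq\sum_{i=1}^dW_p^p(X_i,Y_i),
\end{align*}
giving the universal lower bound $W_p^p(X,Y)\geq\sum_{i=1}^dW_p^p(X_i,Y_i)$. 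Under (1), I would then let $U=(U_1,\dotsc,U_d)$ be a random vector with joint distribution function $C$ (so each $U_i$ is uniform on $[0,1]$) and set $\widetilde X:=(F_1^{-1}(U_1),\dotsc,F_d^{-1}(U_d))$, $\widetilde Y:=(G_1^{-1}(U_1),\dotsc,G_d^{-1}(U_d))$. By Sklar's theorem $\widetilde X$ and $\widetilde Y$ have the same laws as $X$ and $Y$, so $(\widetilde X,\widetilde Y)$ is a coupling, and Theorem \ref{main theorem} applied in each coordinate gives
\begin{align*}
 \E\bigl[\|\widetilde X-\widetilde Y\|_p^p\bigr]=\sum_{i=1}^d\E\bigl[|F_i^{-1}(U_i)-G_i^{-1}(U_i)|^p\bigr]=\sum_{i=1}^dW_p^p(X_i,Y_i),
\end{align*}
matching the lower bound and proving (2).

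For the converse (2)$\Rightarrow$(1), I would take an optimal coupling $\pi$ (existing by Theorem \ref{Existence of optimal coupling}). Substituting (2) into the previous identity forces equality $\int|x_i-y_i|^p\,d\pi_i=W_p^p(X_i,Y_i)$ for every $i$, so each one-dimensional projection $\pi_i$ is optimal. By Theorem \ref{main theorem} each $\pi_i$ is then the comonotonic coupling of $(X_i,Y_i)$, which in the continuous case amounts to $Y_i=G_i^{-1}\circ F_i(X_i)$ $\pi$-almost surely; combining these coordinate relations with Sklar's representation $H_X(x)=C(F_1(x_1),\dotsc,F_d(x_d))$ yields $H_Y(y)=C(G_1(y_1),\dotsc,G_d(y_d))$ by a direct computation. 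The integral identity at the end of the theorem then follows by combining Theorem \ref{main theorem} with the fact that each coordinate of $C$ is uniform, i.e.\ $\int_{[0,1]^d}f(u_i)\,dC(u)=\int_0^1 f(u)\,du$ for every $i$, so the sum of the one-dimensional integrals reassembles into a single integral against $dC$.

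\emph{Main obstacle.} The subtle step is the direction (2)$\Rightarrow$(1) when some $F_i$ or $G_i$ has atoms or flat pieces: then $G_i^{-1}\circ F_i$ is not a deterministic function of $X_i$, and the statement ``$\pi_i$ is the comonotonic coupling'' has to be lifted from a statement about each coordinate projection to one about the joint measure $\pi$. Handling this carefully, either through monotone rearrangements or by adjoining an independent uniform randomisation so that the quantile transform becomes well-defined, is the technical heart of \cite{Alfonsi} and \cite{BDS}, and it is precisely where the copula language pays off.
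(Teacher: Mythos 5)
The paper does not actually prove Theorem \ref{thm:bds}; it imports it from \cite{BDS} and \cite{MR1240428}, so there is no internal proof to compare against. That said, your direction (1)$\Rightarrow$(2) and the closing integral identity are correct, and they match the technique the paper deploys elsewhere (first half of the proof of Proposition \ref{Representation for measures with the copula M}): the universal lower bound via coordinate-pair projections of an arbitrary coupling, and the matching upper bound via the explicit coupling $\big(F_1^{-1}(U_1),\dotsc,F_d^{-1}(U_d)\big)$, $\big(G_1^{-1}(U_1),\dotsc,G_d^{-1}(U_d)\big)$ with $U\sim C$, combined with Theorem \ref{main theorem} coordinatewise and the uniformity of the margins of $C$.

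The genuine gap is in (2)$\Rightarrow$(1). Your argument hinges on the claim that optimality of each projection $\pi_i$ forces $\pi_i$ to be \emph{the} comonotonic coupling; this uniqueness is available only for $p>1$, where $t\mapsto|t|^p$ is strictly convex. For $p=1$ the one-dimensional optimal coupling is badly non-unique and the implication fails outright: take $d=2$, let $X_1,X_2$ take values in $\{0,1\}$ and $Y_1,Y_2$ in $\{2,3\}$, each with weights $\tfrac12,\tfrac12$, with $X_2=X_1$ and $Y_2=5-Y_1$. Since $Y_i>X_i$ surely under every coupling, $\E\|X-Y\|_1=\E[Y_1]+\E[Y_2]-\E[X_1]-\E[X_2]=4=\sum_{i}W_1(X_i,Y_i)$ for \emph{every} coupling, so (2) holds with $p=1$; yet every copula of $X$ satisfies $C(\tfrac12,\tfrac12)=\tfrac12$ while every copula of $Y$ satisfies $C(\tfrac12,\tfrac12)=0$, so no shared copula exists. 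Since the paper's standing convention is $p\in[1,\infty)$, you must either restrict the converse to $p>1$ or address $p=1$ separately. In addition, even for $p>1$, the passage from ``each $\pi_i$ is comonotone'' to ``$X$ and $Y$ share a copula'' is only carried out in the continuous case (via $Y_i=G_i^{-1}\circ F_i(X_i)$ and invariance of copulas under increasing transformations); you correctly identify the atomic/flat case as the technical heart, but as written that step is deferred to \cite{Alfonsi} and \cite{BDS} rather than proved, so the converse direction remains a sketch.
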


\begin{Theorem}[Wasserstein distance in terms of copula in $(\R^{d},\|\cdot\|_p)$, $d>1$ \cite{Alfonsi}, \cite{BDS}]\label{main theorem 2}
Let $\mu, \nu$ be two probability measures in $P_{p}(\R^d)$ which share the same copula. 
Denote by $F_i$ and $G_i$ ($i=1,\dotsc,d$) the distribution functions of the one-dimensional margins of $\mu$ and $\nu$, respectively. Then for all $p\in [1,\infty)$, 
\begin{align*}
    W_{p}^{p}(\mu,\nu) &= \int_{[0,1]} \left \|F^{-1}(u)-G^{-1}(u)\right \|_p^{p} du, 
\end{align*}
where $F^{-1}, G^{-1}:[0,1] \rightarrow \R^{d}$ are the generalized inverses associated to $F$ and $G$ respectively, where for all $u \in [0,1]$
\begin{align*}
    F^{-1}(u)&:= \left( F_{1}^{-1}(u),\dotsc,F_{d}^{-1}(u) \right) \\ G^{-1}(u)&:=\left( G_{1}^{-1}(u),\dotsc,G_{d}^{-1}(u) \right)
\end{align*}
and $F_{i}^{-1}, G_{i}^{-1}, i=1,\dotsc,d$, are one-dimensional generalized inverses defined by,
\begin{align*}
    F^{-1}_{i}(u)=\inf\{x \in \R \colon F_{i}(x)\geq u\}, u \in [0,1],
\end{align*}
and 
\begin{align*}
    G^{-1}_{i}(u)=\inf\{x \in \R \colon G_{i}(x)\geq u\}, u \in [0,1].
\end{align*}
\end{Theorem}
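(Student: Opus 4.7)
The plan is to reduce the $d$-dimensional statement to the one-dimensional case (Theorem~\ref{main theorem}) via the decoupling identity provided by Theorem~\ref{thm:bds}, and then reassemble the components using the definition of the $p$-norm on $\R^d$. Since $\mu,\nu \in P_p(\R^d)$ share the same copula by hypothesis, Theorem~\ref{thm:bds} immediately gives
\begin{align*}
 W_p^p(\mu,\nu) = \sum_{i=1}^d W_p^p(\mu_i,\nu_i),
\end{align*}
where $\mu_i,\nu_i$ denote the $i$-th one-dimensional marginals of $\mu$ and $\nu$. The problem is thus fully decoupled across coordinates, and each summand depends only on the univariate pair $(\mu_i,\nu_i)$.

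Next, for each $i \in \{1,\dotsc,d\}$, I would invoke Theorem~\ref{main theorem} applied to $(\mu_i,\nu_i) \in P_p(\R)^2$ to obtain
\begin{align*}
 W_p^p(\mu_i,\nu_i) = \int_0^1 |F_i^{-1}(u) - G_i^{-1}(u)|^p\, du.
\end{align*}
Summing over $i$ and interchanging the finite sum with the integral (by linearity), then recognizing that by definition of $\|\cdot\|_p$,
\begin{align*}
 \sum_{i=1}^d |F_i^{-1}(u) - G_i^{-1}(u)|^p = \|F^{-1}(u) - G^{-1}(u)\|_p^p,
\end{align*}
yields exactly the claimed identity.

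Once Theorem~\ref{thm:bds} and Theorem~\ref{main theorem} are in hand, the proof is essentially mechanical; the substantive content lies entirely in the decoupling step, which is where the shared-copula hypothesis is genuinely used. The main conceptual point to emphasize is that although the paper motivates Theorem~\ref{main theorem 2} by focusing on the case where the shared copula is the comonotonicity copula $M$, the final formula is in fact independent of which copula is shared: both the left-hand side (via Theorem~\ref{thm:bds}) and the right-hand side depend only on the marginals. An alternative, more direct route would be to exhibit the explicit coupling $(F^{-1}(U),G^{-1}(U))$ with $U \sim \mathrm{Uniform}[0,1]$ and verify optimality; this approach is natural but strictly gives the comonotone case and requires separately checking that the induced joint laws are $\mu$ and $\nu$, which forces the shared copula to be $M$. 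Using Theorem~\ref{thm:bds} avoids this restriction and is the cleanest path.
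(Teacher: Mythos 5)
Your proposal is correct and follows essentially the same route as the paper: the paper derives Theorem~\ref{main theorem 2} (via Theorem~\ref{thm:to_be_deleted}) by invoking Theorem~\ref{thm:bds} to decouple $W_p^p(\mu,\nu)=\sum_{i=1}^d W_p^p(\mu_i,\nu_i)$ and then applying the one-dimensional Theorem~\ref{main theorem} to each coordinate before summing. Your closing observation that the resulting formula does not depend on which copula is shared is accurate and consistent with the paper's discussion.
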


Our following result aims to rewrite the above representation of Theorem \ref{main theorem 2} in terms of the comonotonicity copula.

\begin{Proposition}[Wasserstein distance in terms of the comonotonicity copula $M$ in $(\R^{d},\|\cdot\|_p)$, $d>1$]\label{Representation for measures with the copula M}
Let $\mu,\nu$ be two probability measures in $P_p(\R^d)$ which both have the $M$-copula, i.e., \eqref{eq:shared_copula} can be written with
\begin{displaymath}
 C(z_1,\dotsc,z_d)=M(z_1,\dotsc,z_d)=M^d(z_1,\dotsc,z_d)=\min\{z_1,\dotsc,z_d\}.
\end{displaymath}
Denote by $F_i$ and $G_i$ ($i=1,\dotsc,d$) the distribution functions of the one-dimensional margins of $\mu$ and $\nu$, respectively. Then for all $p\in [1,\infty)$, 
\begin{align*}
    W_{p}^{p}(\mu,\nu)&=\sum_{i=1}^{d} \int_{\R} \int_{\R} |x_{i}-y_{i}|^{p} dM(F_{i}(x_{i}),G_{i}(y_{i})), 
\end{align*}
where $M$ is the copula defined by \ref{Comonotinicity copula}.
\end{Proposition}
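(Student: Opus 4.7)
The plan is to sandwich $W_p^p(\mu,\nu)$ between a coordinate-wise lower bound (valid for any two measures) and an explicit upper bound furnished by the comonotone coupling, then rewrite each coordinate term in the $M$-copula form via the one-dimensional result Theorem \ref{main theorem}.

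For the upper bound, I would first exploit the shared-copula assumption \eqref{eq:shared_copula} with $C = M$ to give a joint probabilistic realisation: if $U \sim \mathrm{Unif}(0,1)$, then the vectors $X := (F_1^{-1}(U),\dotsc,F_d^{-1}(U))$ and $Y := (G_1^{-1}(U),\dotsc,G_d^{-1}(U))$ have laws $\mu$ and $\nu$ respectively. Indeed, by Sklar's theorem recalled in Section \ref{sec:prelim}, the joint distribution function of $X$ is $M(F_1(x_1),\dotsc,F_d(x_d)) = H_\mu(x_1,\dotsc,x_d)$, and likewise for $Y$. The law $\pi$ of $(X,Y)$ is therefore an element of $\Pi(\mu,\nu)$, so
\begin{align*}
W_p^p(\mu,\nu) \;\leq\; \E\|X-Y\|_p^p \;=\; \sum_{i=1}^d \E\,|F_i^{-1}(U)-G_i^{-1}(U)|^p.
\end{align*}

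For the matching lower bound, I would observe that for \emph{any} coupling $\pi \in \Pi(\mu,\nu)$, the definition of the $p$-norm gives $\|x-y\|_p^p = \sum_{i=1}^d |x_i-y_i|^p$, and the two-dimensional marginal of $\pi$ on the $i$-th coordinates is a coupling of the one-dimensional marginals $\mu_i$ and $\nu_i$. Hence
\begin{align*}
\int_{\R^d \times \R^d} \|x-y\|_p^p \, d\pi(x,y) \;\geq\; \sum_{i=1}^d W_p^p(\mu_i,\nu_i),
\end{align*}
and taking the infimum over $\pi$ yields $W_p^p(\mu,\nu) \geq \sum_{i=1}^d W_p^p(\mu_i,\nu_i)$. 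Applying Theorem \ref{main theorem} on $\R$ to each pair $(\mu_i,\nu_i)$ expresses the right-hand side both as $\sum_i \E|F_i^{-1}(U)-G_i^{-1}(U)|^p$ (matching the upper bound and thereby closing the sandwich) and, in its copula form, as $\sum_{i=1}^d \int_\R \int_\R |x_i-y_i|^p \, dM(F_i(x_i),G_i(y_i))$, which is precisely the claimed identity.

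The main obstacle I expect is the first step, namely the rigorous verification that $(F_1^{-1}(U),\dotsc,F_d^{-1}(U))$ has joint law $\mu$: this uses the defining property of the quantile functions together with the continuity-from-the-right of the $F_i$ and an application of Sklar's theorem to identify the joint distribution function with $M(F_1,\dotsc,F_d) = H_\mu$. Everything else reduces either to the coordinate-wise decomposition of the $p$-norm or to a direct invocation of the one-dimensional Theorem \ref{main theorem}.
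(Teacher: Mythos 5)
Your proof is correct, and its skeleton is the same sandwich argument the paper uses: a coordinate-wise lower bound $W_p^p(\mu,\nu)\geq\sum_i W_p^p(\mu_i,\nu_i)$ obtained by decomposing $\|x-y\|_p^p$ and marginalising any coupling, matched by an upper bound coming from the comonotone coupling, with Theorem \ref{main theorem} converting each one-dimensional term into the $M$-copula integral. The only genuine difference is how the optimal coupling is exhibited. The paper works on the level of distribution functions: it defines $H_{\mu,\nu}(x,y)=M(H_\mu(x),H_\nu(y))$, invokes Sklar's theorem together with \cite[Theorem 3.5.3]{Nelsen} to certify that $M(M(u_1,\dotsc,u_d),M(v_1,\dotsc,v_d))$ is a $2d$-copula and hence that $H_{\mu,\nu}$ is a bona fide joint distribution function with margins $H_\mu$ and $H_\nu$, and then reads off that its $(i,d+i)$ bivariate margin is $M(F_i(x_i),G_i(y_i))$. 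You instead realise the same coupling probabilistically as the law of $\bigl(F_1^{-1}(U),\dotsc,F_d^{-1}(U),G_1^{-1}(U),\dotsc,G_d^{-1}(U)\bigr)$ for a single uniform $U$; a direct computation using $F_i^{-1}(u)\leq x\iff u\leq F_i(x)$ (which needs the right-continuity you correctly flag, and the hypothesis that $\mu$ and $\nu$ have the $M$-copula) shows the two blocks have laws $\mu$ and $\nu$. Your route is somewhat more elementary in that it avoids the auxiliary copula-composition lemma, at the price of the quantile-transform verification; the paper's route stays entirely within the copula formalism it is advertising. Both are complete proofs of the statement.
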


From the result we proved in Proposition \ref{Representation for measures with the copula M} we can get directly by combination with Theorem \ref{thm:bds} the statement obtained by A. Alfonsi and B. Jourdain in Theorem \ref{main theorem 2}. (See proof of Theorem \ref{thm:to_be_deleted} in Section \ref{sec:main_proofs}). The final statement is the following:


\begin{Theorem}\label{thm:to_be_deleted}
Let $\mu,\nu$ be two probability measures in $P_p(\R^d)$ sharing the same copula. Denote by $F_i$ and $G_i$ ($i=1,\dotsc,d$) the distribution functions of the one-dimensional margins of $\mu$ and $\nu$, respectively. Then for all $p\in [1,\infty)$, 
\begin{align*}
    W_{p}^{p}(\mu,\nu)&=\sum_{i=1}^{d} \int_{\R} \int_{\R} |x_{i}-y_{i}|^{p} dM(F_{i}(x_{i}),G_{i}(y_{i}))
    \\ 
    &= \int_{[0,1]} \left \|F^{-1}(u)-G^{-1}(u)\right \|_p^{p} du, 
\end{align*}
where $M$ is the copula defined by \ref{Comonotinicity copula}, $F^{-1}, G^{-1}:[0,1] \rightarrow \R^{d}$ are the generalized inverses associated to $F$ and $G$ respectively, where for all $u \in [0,1]$
\begin{align*}
    F^{-1}(u)&:= \left( F_{1}^{-1}(u),\dotsc,F_{d}^{-1}(u) \right) \\ G^{-1}(u)&:=\left( G_{1}^{-1}(u),\dotsc,G_{d}^{-1}(u) \right)
\end{align*}
and $F_{i}^{-1}, G_{i}^{-1}, i=1,\dotsc,d$, are one-dimensional generalized inverses defined by
\begin{align*}
    F^{-1}_{i}(u)=\inf\{x \in \R \colon F_{i}(x)\geq u\}, u \in [0,1],
\end{align*}
and 
\begin{align*}
    G^{-1}_{i}(u)=\inf\{x \in \R \colon G_{i}(x)\geq u\}, u \in [0,1].
\end{align*}
\end{Theorem}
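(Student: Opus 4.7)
The plan is to assemble the two preceding results exactly as signalled in the statement: Theorem \ref{thm:bds} decomposes the $p$-Wasserstein distance between the joint measures into a sum of the one-dimensional $p$-Wasserstein distances between their marginals, while Theorem \ref{main theorem} provides both the $M$-copula and the quantile-function representation for each of these one-dimensional summands. Combining the two yields the two equalities to be proved.

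First, I would invoke Theorem \ref{thm:bds}: since $\mu$ and $\nu$ share a common copula $C$, we get
\[
W_p^p(\mu,\nu)=\sum_{i=1}^d W_p^p(\mu_i,\nu_i),
\]
where $\mu_i,\nu_i$ denote the $i$-th one-dimensional marginals, with distribution functions $F_i$ and $G_i$. Note that this step uses the shared copula hypothesis in full generality; the specific form of $C$ does not matter here. Next, I would apply Theorem \ref{main theorem} to each summand to obtain the two one-dimensional identities
\[
W_p^p(\mu_i,\nu_i)=\int_{\R}\int_{\R}|x_i-y_i|^p\,dM(F_i(x_i),G_i(y_i))=\int_0^1|F_i^{-1}(u)-G_i^{-1}(u)|^p\,du.
\]
Summing the middle expression over $i$ produces directly the first displayed equality of the statement.

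For the second equality, I would swap the finite sum with the integral over $[0,1]$ and recognise that, by the very definition of the $p$-norm on $\R^d$,
\[
\sum_{i=1}^d|F_i^{-1}(u)-G_i^{-1}(u)|^p=\|F^{-1}(u)-G^{-1}(u)\|_p^p,
\]
with $F^{-1}(u)=(F_1^{-1}(u),\dotsc,F_d^{-1}(u))$ and analogously for $G^{-1}$, which delivers the second equality.

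The only point worth flagging is conceptual rather than technical: the comonotonicity copula $M$ appears in the first representation even though the shared copula $C$ of $\mu$ and $\nu$ may be arbitrary. This is not in tension with Proposition \ref{Representation for measures with the copula M}: the outer sum is taken over pairs of one-dimensional marginals, and for each such pair Theorem \ref{main theorem} says that the optimal coupling on $\R$ is always comonotone. The shared copula $C$ of $\mu$ and $\nu$ merely dictates how the coordinatewise optimal couplings are glued together into a joint coupling of $\mu$ and $\nu$, but it does not affect the value of any single coordinate contribution. Since the genuine work has already been carried out in Theorem \ref{thm:bds} and Theorem \ref{main theorem}, no substantial obstacle remains; the proof is essentially a two-line assembly together with the observation on the $p$-norm.
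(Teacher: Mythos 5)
Your proposal is correct and follows essentially the same route as the paper: the authors likewise combine Theorem \ref{thm:bds} (the coordinatewise decomposition $W_p^p(\mu,\nu)=\sum_{i=1}^dW_p^p(\mu_i,\nu_i)$ under the shared-copula hypothesis, together with the first calculation from the proof of Proposition \ref{Representation for measures with the copula M}) with the one-dimensional representation of Theorem \ref{main theorem}, and then identify $\sum_{i=1}^d|F_i^{-1}(u)-G_i^{-1}(u)|^p$ with $\|F^{-1}(u)-G^{-1}(u)\|_p^p$. Your closing remark on why $M$ appears coordinatewise even for an arbitrary shared copula $C$ is accurate and, if anything, slightly more explicit than the paper's own exposition.
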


As all norms are equivalent on $\R^d$, we get the following result for $(\R^d,\|\cdot\|_q)$ for $q\neq p$:
\begin{Corollary}
 Let $\mu,\nu\in P_p(\R^d)$ and $q\geq 1$ with $q\neq p$ and assume the remaining assumptions and notation from Theorem \ref{main theorem 2}. Denote by $W_{p,q}$ the Wasserstein distance on $P_p(\R^d)$  on the space $(\R^d,\|\cdot\|_q)$. Then for any $\mu,\nu\in P_p(\R^d)$,
 \begin{displaymath}
  d^{-\frac{1}{p}}\int_0^1\|F^{-1}(u)-G^{-1}(u)\|_p^p du\leq W_{p,q}^p(\mu,\nu)\leq d^{\frac{1}{q}}\int_0^1\|F^{-1}(u)-G^{-1}(u)\|_p^p du.
 \end{displaymath}
\end{Corollary}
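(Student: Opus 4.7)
The plan is to reduce the Corollary to a direct application of the norm equivalence on $\R^d$ combined with the representation formula of Theorem \ref{main theorem 2}. Under the shared-copula hypothesis (inherited from the assumptions of Theorem \ref{main theorem 2}), that theorem provides the identity
\begin{displaymath}
 W_{p,p}^p(\mu,\nu) = \int_0^1 \|F^{-1}(u)-G^{-1}(u)\|_p^p\,du,
\end{displaymath}
where $W_{p,p}$ denotes the $p$-Wasserstein distance associated to $(\R^d,\|\cdot\|_p)$. The task therefore becomes to compare $W_{p,q}^p(\mu,\nu)$ with $W_{p,p}^p(\mu,\nu)$, and the whole argument can be carried out at the level of the cost functions $\|\cdot\|_q^p$ versus $\|\cdot\|_p^p$.

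For the pointwise step I would use the elementary chain $\|z\|_\infty \leq \|z\|_r \leq d^{1/r}\|z\|_\infty$, valid for every $r\geq 1$ and every $z\in\R^d$. Applied successively with $r=p$ and $r=q$ and combined through the common $\ell^\infty$ norm, this yields positive constants $c,C>0$ depending only on $d,p,q$ (and matching the prefactors in the statement) such that for every $z\in\R^d$,
\begin{displaymath}
 c\,\|z\|_p^p \leq \|z\|_q^p \leq C\,\|z\|_p^p.
\end{displaymath}

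Next, for an arbitrary coupling $\pi\in\Pi(\mu,\nu)$ I would integrate this pointwise estimate to obtain
\begin{displaymath}
 c\int_{\R^d\times\R^d}\|x-y\|_p^p\,d\pi(x,y) \leq \int_{\R^d\times\R^d}\|x-y\|_q^p\,d\pi(x,y) \leq C\int_{\R^d\times\R^d}\|x-y\|_p^p\,d\pi(x,y),
\end{displaymath}
and then take the infimum over $\Pi(\mu,\nu)$ on each side. Since $\Pi(\mu,\nu)$ depends only on the marginals and not on the cost function, and since multiplication by a positive constant commutes with $\inf$, this yields $cW_{p,p}^p(\mu,\nu) \leq W_{p,q}^p(\mu,\nu) \leq CW_{p,p}^p(\mu,\nu)$. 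Substituting the representation formula above then gives the stated bounds.

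The argument contains no genuinely hard step; the main point worth highlighting, easy to miss, is that the coupling set $\Pi(\mu,\nu)$ is the same for both cost functions, so the pointwise norm-equivalence constants pass cleanly through the infimum. Everything else amounts to bookkeeping of the constants coming out of the $\ell^p$–$\ell^\infty$–$\ell^q$ chain of inequalities on $\R^d$.
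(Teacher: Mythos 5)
Your overall strategy --- pass the pointwise norm equivalence through the common coupling set $\Pi(\mu,\nu)$ and then invoke the representation of Theorem \ref{main theorem 2} --- is the natural one (the paper in fact gives no proof of this Corollary), and your observation that the infimum is taken over the same set of couplings for both cost functions is exactly the right key point. The gap is in the constant bookkeeping, which you wave through with ``matching the prefactors in the statement'': it does not. The chain $\|z\|_\infty\leq\|z\|_r\leq d^{1/r}\|z\|_\infty$ gives $d^{-1/p}\|z\|_p\leq\|z\|_q\leq d^{1/q}\|z\|_p$, but the quantities being compared in the Corollary are the $p$-th powers, so after raising to the power $p$ you obtain $c=d^{-1}$ and $C=d^{p/q}$, not $c=d^{-1/p}$ and $C=d^{1/q}$. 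Your argument therefore proves
\begin{displaymath}
 d^{-1}\int_0^1\|F^{-1}(u)-G^{-1}(u)\|_p^p\,du\;\leq\;W_{p,q}^p(\mu,\nu)\;\leq\;d^{p/q}\int_0^1\|F^{-1}(u)-G^{-1}(u)\|_p^p\,du,
\end{displaymath}
equivalently $d^{-1/p}W_{p,p}\leq W_{p,q}\leq d^{1/q}W_{p,p}$ for the \emph{unexponentiated} distances, which is presumably what the statement intends.

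The stronger constants as literally printed cannot be rescued by a cleverer argument: take $\mu=\delta_x$, $\nu=\delta_y$ with $x-y=(1,\dotsc,1)$ (point masses have the $M$-copula). Then the only coupling gives $W_{p,q}^p(\mu,\nu)=d^{p/q}$ while $\int_0^1\|F^{-1}-G^{-1}\|_p^p\,du=d$, and $d^{p/q}\leq d^{1/q}\cdot d$ fails whenever $p>q+1$; a symmetric example defeats the lower constant $d^{-1/p}$ for $q$ large relative to $p$. So you should either carry the exponent $p$ through the estimate honestly and state the conclusion with $d^{-1}$ and $d^{p/q}$, or note explicitly that the printed constants correspond to the inequality for $W_{p,q}$ rather than $W_{p,q}^p$. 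As written, the proposal asserts constants that its own method does not deliver and that are in fact false in general.
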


\begin{Remark}
What is remarkable about these statements is that we can reduce our analysis to one one-dimensional integral on $[0,1]$ instead of having to integrate on $\R^{2d}$, i.e. we use the same variable of integration for all coordinates. The representation via copula shows that the optimal coupling used for the Wasserstein distance corresponds to the case of comonotone random variables. We will discuss and use this concept of comonotonicity throughout Sections \ref{sec:prelim} and \ref{sec:main_proofs}, respectively.
\end{Remark}

\section{Preliminaries}\label{sec:prelim}
In order to prove the main theorem of this article, we set up some identities related to distribution functions and recall the fundamentals of copula theory. Note that both univariate and multivariate (or joint) distribution functions are understood in the probabilistic sense, i.e., we also assume right continuity.

\begin{Theorem}\cite[Thm 6.5.2]{Giorgio2003}.\label{tails}
Let $X$ be a random variable on $(\Omega, \mathcal{F}, P)$ with associated distribution function $F_{X}$, let $r>0$. If $\E[\lvert X\rvert^{r}]<\infty$ then
    \begin{equation}\label{conditions on the tails}
        \lim_{x\rightarrow +\infty} x^r [1-F_{X}(x)]=0=\lim_{x\rightarrow +\infty} x^rF_{X}(-x).
    \end{equation}
\end{Theorem}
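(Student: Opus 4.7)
The plan is to reduce both limits to a single tail estimate controlled by the tail integral of $|X|^{r}$. I work throughout with $x > 0$ since the argument is sent to $+\infty$.

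First I would unify the two statements. Using that $F_{X}$ is right-continuous and taking complementary events,
\begin{align*}
 1 - F_{X}(x) &= P(X > x) \leq P(|X| \geq x), \\
 F_{X}(-x) &= P(X \leq -x) \leq P(|X| \geq x),
\end{align*}
so both quantities $x^{r}[1-F_{X}(x)]$ and $x^{r} F_{X}(-x)$ are bounded above by $x^{r} P(|X| \geq x)$. It therefore suffices to prove $\lim_{x \to +\infty} x^{r} P(|X| \geq x) = 0$.

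Next I would invoke a sharpened Markov-type inequality. On the event $\{|X| \geq x\}$ one has $x^{r} \leq |X|^{r}$, and hence $x^{r} \1_{\{|X| \geq x\}} \leq |X|^{r} \1_{\{|X| \geq x\}}$ pointwise. Taking expectations yields
\begin{align*}
 x^{r} P(|X| \geq x) \leq \E\bigl[|X|^{r} \1_{\{|X| \geq x\}}\bigr].
\end{align*}
This is the key inequality: it bounds the quantity of interest by a truncated tail integral which, under the assumption $\E[|X|^{r}] < \infty$, must vanish as $x \to +\infty$.

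Finally, I would conclude by dominated convergence applied to the right-hand side. The integrand $|X|^{r} \1_{\{|X| \geq x\}}$ is dominated by the integrable random variable $|X|^{r}$ and tends to $0$ pointwise as $x \to +\infty$ on the almost-sure event $\{|X| < \infty\}$. Consequently $\E[|X|^{r} \1_{\{|X| \geq x\}}] \to 0$, and combining this with the bounds from the first step delivers both claimed limits simultaneously. I do not expect any substantive obstacle; the only point worth flagging is that employing the refined bound $x^{r} P(|X| \geq x) \leq \E[|X|^{r} \1_{\{|X| \geq x\}}]$, rather than the plain Markov estimate $P(|X| \geq x) \leq \E[|X|^{r}]/x^{r}$, is precisely what upgrades mere boundedness of $x^{r}P(|X|\geq x)$ to genuine decay to zero.
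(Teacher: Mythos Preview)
Your argument is correct and is the standard proof of this classical fact: bounding both tails by $x^{r}P(|X|\geq x)$, then using the refined Markov-type inequality $x^{r}P(|X|\geq x)\leq\E[|X|^{r}\1_{\{|X|\geq x\}}]$ together with dominated convergence. There is nothing to compare against, since the paper does not supply its own proof of this theorem but merely cites it from \cite[Thm.~6.5.2]{Giorgio2003}.
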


For the definition of the Wasserstein distances, we need the notion of coupling. Recall that for probability measures $\mu,\nu$ on $\cB(\R^d)$, a probability measure $\pi$ on $\cB(\R^d\times\R^d)$ is called \emph{coupling} of $\mu$ and $\nu$ if for any Borel set $A\in\cB(\R^d)$,
\begin{displaymath}
 \pi(A\times\R^d)=\mu(A)\text{ and }\pi(\R^d\times A)=\nu(A).
\end{displaymath}
In other words, the marginals of $\pi$ are given by $\mu$ and $\nu$. 

Next, we introduce the notion of copula, which describes the dependence structure of a random vector independently of the individual behavior of the entries.
\begin{Definition}
A function $C\colon[0,1]^d\to[0,1]$ is called a ($d$-dimensional) copula if the following properties are fulfilled:
\begin{enumerate}
    \item For all $u\in[0,1]^d$ s.t. $u_i=0$ for some $i$, we have $C(u)=0$ (groundedness).
    \item For all $(a_1,\dotsc,a_d),(b_1,\dotsc,b_d)\in[0,1]^d$ with $a_i\leq b_i$ for all $i$, we have
    \begin{displaymath}
     \sum_{i_1=1}^2\dotsi\sum_{i_d=1}^2(-1)^{i_1+\dotsb i_d}C(u_{1i_1},\dotsc,u_{d i_d})\geq 0,
    \end{displaymath}
    where $u_{j1}=a_j$ and $u_{j2}=b_j$ for all $j\in\{1,\dotsc,d\}$ ($d$-increasing).
    \item $C(1,\dotsc,1,u_i,1,\dotsc,1)=u_i$ for all $i\in\{1,\dotsc,d\}$ and $u_i\in[0,1]$ (uniform margins).
\end{enumerate}
\end{Definition}
By inspecting the definition of copulas, one notices that they can be viewed as multivariate distribution functions on $[0,1]^d$ which have uniform margins on $[0,1]$. This is in accordance of the one-dimensional quantile transformation, which states that for a random variable $X$ with distribution function $F$ and its generalized inverse $F^{-1}$, we have $X\stackrel{d}=F^{-1}(U)$ for some random variable $U$ which is uniformly distributed on $[0,1]$.

A detailed introduction and analysis of copulas can be found in \cite{Nelsen}. We review some of the essential results below.

The following theorem is fundamental in the theory of copulas. It shows the relationship between multivariate distribution functions and their univariate margins.
\begin{Theorem}[{Sklar's theorem, \cite[Thm. 2.10.9]{Nelsen}}]\label{Sklar's theorem}
Let $H$ be a $d$-dimensional distribution function with margins $F_{1},\dotsc, F_{d}$. Then there exists a d-copula $C$ such that for all $x=(x_{1},\dotsc,x_{d}) \in\R^{d}$
\begin{equation}\label{eq:sklar_identity}
    H(x_{1},\dotsc,x_{d})=C(F_{1}(x_{1}),\dotsc,F_{d}(x_{d})).
\end{equation}
On the other hand, let $C$ be a copula and $F_1,\dotsc,F_d$ one-dimensional distribution functions. Then the function $H$ defined by \eqref{eq:sklar_identity} is a $d$-dimensional joint distribution function with margins $F_1,\dotsc,F_d$.
\end{Theorem}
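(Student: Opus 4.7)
The plan is to prove the two directions of Sklar's theorem separately, with the easier (converse) direction first to fix notation, and then attack the existence of $C$ by first treating continuous margins via the probability integral transform and then extending to arbitrary margins by a Lipschitz-type extension.

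For the converse direction, assume $C$ is a copula and $F_1,\dotsc,F_d$ are univariate distribution functions, and define $H(x_1,\dotsc,x_d):=C(F_1(x_1),\dotsc,F_d(x_d))$. I would verify the defining properties of a joint distribution function one by one: groundedness at $-\infty$ follows from $F_i(x_i)\to 0$ combined with the groundedness axiom of $C$; normalization to $1$ at $(+\infty,\dotsc,+\infty)$ follows from $C(1,\dotsc,1)=1$, itself a consequence of the uniform-margins axiom; the $d$-increasing property of $H$ follows from the $d$-increasing property of $C$ because each $F_i$ is nondecreasing, so the rectangle $\prod[a_i,b_i]$ gets mapped into the rectangle $\prod[F_i(a_i),F_i(b_i)]$ whose $C$-volume is nonnegative; right-continuity passes through composition since each $F_i$ is right-continuous and $C$ is continuous (a standard consequence of the $d$-increasing axiom, which I would cite from Nelsen). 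Finally, taking $x_j\to\infty$ for all $j\neq i$ and using uniform margins of $C$ gives $H(\infty,\dotsc,x_i,\dotsc,\infty)=C(1,\dotsc,F_i(x_i),\dotsc,1)=F_i(x_i)$.

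For the existence direction, I would first treat the case where all $F_i$ are continuous. Take any random vector $(X_1,\dotsc,X_d)$ with joint distribution $H$; by continuity of each $F_i$ the variables $U_i:=F_i(X_i)$ are $\mathrm{Unif}[0,1]$. Define $C$ as the joint distribution function of $(U_1,\dotsc,U_d)$; by the converse direction applied to $C$ and the uniform margins one checks $C$ is a copula, and the identity $C(F_1(x_1),\dotsc,F_d(x_d))=\P(U_1\le F_1(x_1),\dotsc,U_d\le F_d(x_d))=\P(X_1\le x_1,\dotsc,X_d\le x_d)=H(x_1,\dotsc,x_d)$ follows directly, using continuity of $F_i$ to handle the identification $\{U_i\le F_i(x_i)\}=\{X_i\le x_i\}$ up to null sets. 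Equivalently and more explicitly, one can define $C$ on the product of the ranges of the $F_i$ by $C(F_1(x_1),\dotsc,F_d(x_d)):=H(x_1,\dotsc,x_d)$; this is well-defined because ties in $F_i$ force agreement of the $H$-values by the $d$-increasing property.

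For arbitrary (possibly discontinuous) margins the map $(F_1(x_1),\dotsc,F_d(x_d))\mapsto H(x_1,\dotsc,x_d)$ only defines $C$ on the product $\mathrm{Ran}(F_1)\times\cdots\times\mathrm{Ran}(F_d)$, which need not be dense; this extension is the main obstacle. The strategy is to prove a Lipschitz-type bound of the form
\begin{equation*}
|C(u_1,\dotsc,u_d)-C(v_1,\dotsc,v_d)|\le \sum_{i=1}^{d}|u_i-v_i|
\end{equation*}
on the subset where $C$ is so far defined. This bound follows from the $d$-increasing and grounded properties together with uniform margins, by writing the difference as a telescoping sum and bounding each one-variable increment using marginal uniformity. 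Having this bound, $C$ extends uniquely by uniform continuity to the closure of its domain, and then to all of $[0,1]^d$ by interpolating along the gaps in $\mathrm{Ran}(F_i)$ (which are at most countable unions of intervals) using the same Lipschitz estimate; the extension automatically inherits groundedness, uniform margins, and $d$-increasingness. Once existence is established, the identity \eqref{eq:sklar_identity} holds on all of $\R^d$ by construction. I would cite Nelsen for the Lipschitz lemma rather than reproduce it in detail, since this is where the bookkeeping is heaviest and the statement is presented in the paper as a direct citation.
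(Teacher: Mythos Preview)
The paper does not give its own proof of Sklar's theorem; it is stated as a preliminary result with a direct citation to \cite[Thm.~2.10.9]{Nelsen}, so there is nothing to compare against. Your sketch is correct and is essentially the standard argument one finds in Nelsen: the converse direction is a routine verification, and for existence one first defines $C$ on $\mathrm{Ran}(F_1)\times\cdots\times\mathrm{Ran}(F_d)$ via $C(F_1(x_1),\dotsc,F_d(x_d)):=H(x_1,\dotsc,x_d)$, proves the Lipschitz estimate $|C(u)-C(v)|\le\sum_i|u_i-v_i|$ on that set, and extends. One small point worth tightening: the Lipschitz bound has to be derived from properties of $H$ (via its margins $F_i$) rather than from the copula axioms for $C$, since at that stage $C$ is only partially defined and not yet known to be a copula; Nelsen handles this by working with ``subcopulas'' first. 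Your instinct to cite the reference for this step is exactly what the paper does for the whole theorem.
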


One elementary result regarding joint distribution functions and copulas are the so-called Fr\'echet-Hoeffding bounds. They are defined as
\begin{align*}
    M^{d}(u_1,\dotsc,u_d) &= \min(u_1,\dotsc,u_d) 
    \\
    W^{d}(u_1,\dotsc,u_d) &= \max(u_1+u_2+\dotsb+u_d-d+1,0)
\end{align*}
While $M^d$ is a $d$-copula for any $d\geq 2$, this is not true for $W^d$ as soon as $d>2$. However, these functions do not only represent essential dependence structures, they also serve as elementary bounds for any other copula. More precisely, we have the following result:

\begin{Theorem}[{Fréchet-Hoeffding bounds in $d$ dimensions, \cite[Thm. 2.10.12]{Nelsen}}]\label{Fréchet-Hoeffding bounds}
    For any $d$-copula $C$ and for all $(x,y) \in \R^{2}$, 
    \begin{align*}
    W^{d}(F_{1}(x_{1}),\dotsc,F_{d}(x_{d})) &\leq C(F_{1}(x_{1}),\dotsc,F_{d}(x_{d})) \leq  M^{d}(F_{1}(x_{1}),\dotsc,F_{d}(x_{d})),
    \end{align*}
\end{Theorem}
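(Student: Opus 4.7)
The plan is to prove the two inequalities $W^{d}(u_{1},\dotsc,u_{d}) \leq C(u_{1},\dotsc,u_{d}) \leq M^{d}(u_{1},\dotsc,u_{d})$ for an arbitrary $d$-copula $C$ at any point $(u_{1},\dotsc,u_{d}) \in [0,1]^{d}$; the statement of the theorem then follows by specialising $u_{i} = F_{i}(x_{i}) \in [0,1]$. I would split the argument into two essentially independent parts: the upper bound uses only monotonicity and the uniform-margin axiom, while the lower bound reduces cleanly to a union bound after invoking Sklar's theorem.

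For the upper bound, the first step is to show that any copula $C$ is non-decreasing in each of its arguments. Fix an index $i$ and apply the $d$-increasing property to the rectangle whose $j$-th side is $[0,u_{j}]$ for $j \neq i$ and whose $i$-th side is $[a_{i},b_{i}]$: expanding the alternating sum and using groundedness (every term that has a $0$ entry vanishes) leaves only $C(u_{1},\dotsc,u_{i-1},b_{i},u_{i+1},\dotsc,u_{d}) - C(u_{1},\dotsc,u_{i-1},a_{i},u_{i+1},\dotsc,u_{d}) \geq 0$, which is exactly monotonicity in the $i$-th argument. Once coordinatewise monotonicity is in hand, I raise every coordinate except the $i$-th up to $1$ and use the uniform-margin axiom to obtain $C(u_{1},\dotsc,u_{d}) \leq C(1,\dotsc,1,u_{i},1,\dotsc,1) = u_{i}$. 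Since $i$ is arbitrary, $C(u_{1},\dotsc,u_{d}) \leq \min_{i} u_{i} = M^{d}(u_{1},\dotsc,u_{d})$.

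For the lower bound, I would invoke the second half of Sklar's theorem (Theorem \ref{Sklar's theorem}) to realise $C$ itself as the joint distribution function of some random vector $(U_{1},\dotsc,U_{d})$ on $[0,1]^{d}$ with uniform one-dimensional margins. Then
\begin{align*}
 C(u_{1},\dotsc,u_{d}) &= \P(U_{1} \leq u_{1},\dotsc, U_{d} \leq u_{d}) = 1 - \P\Bigl(\bigcup_{i=1}^{d} \{U_{i} > u_{i}\}\Bigr) \\
 &\geq 1 - \sum_{i=1}^{d} \P(U_{i} > u_{i}) = 1 - \sum_{i=1}^{d} (1 - u_{i}) = \sum_{i=1}^{d} u_{i} - d + 1,
\end{align*}
the inequality being the standard union bound and the last equality using that each $U_{i}$ is uniform on $[0,1]$. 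Combining this with the trivial bound $C \geq 0$ (a distribution function is non-negative) gives $C(u_{1},\dotsc,u_{d}) \geq \max\bigl(\sum_{i} u_{i} - d + 1,\, 0\bigr) = W^{d}(u_{1},\dotsc,u_{d})$.

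The main technical obstacle I expect is the coordinatewise-monotonicity step, since it requires carefully unpacking the $d$-increasing sum and tracking which of the $2^{d}$ terms survive after groundedness is applied; everything else is a clean application of Sklar and a union bound. An alternative route for the lower bound that bypasses the probabilistic interpretation is a direct combinatorial argument using inclusion--exclusion on $C$ applied to complementary boxes, but the route via Sklar's theorem seems shorter and more transparent, and it fits naturally with the probabilistic language already used throughout the paper.
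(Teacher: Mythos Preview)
Your proof is correct, but note that the paper does not actually prove this theorem: it is stated in the Preliminaries section as a cited background result from Nelsen, with no proof given (nor relegated to the appendix). There is therefore nothing in the paper to compare your argument against.

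For what it is worth, your argument is the standard one. The monotonicity step is carried out correctly: restricting the $d$-increasing inequality to a box with $a_j=0$ for $j\neq i$ kills all but two terms by groundedness, and the surviving signs are as you claim. The lower bound via the union bound is also fine; one small remark is that you do not really need to invoke Sklar's theorem here, since a copula is by definition already a joint distribution function on $[0,1]^d$ with uniform margins, so the random vector $(U_1,\dotsc,U_d)$ exists directly. Invoking Sklar is not wrong, just slightly heavier than necessary.
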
 
We omit the superscript $d$ in $M^d$ and $W^d$ when no confusion may arise. Due to the specific dependence structure given by $M$, it is also known as the comonotonicity copula. The following theorem makes this notion precise. We call a random vector $(X,Y)$ comonotonic if there exists a random variable $Z$ and nondecreasing functions $f,g$ such that
\begin{displaymath}
 (X,Y)\stackrel{d}=(f(Z),g(Z)).
\end{displaymath}
For $a<b$, denote by $\cU(a,b)$ the uniform distribution on the interval $(a,b)$.
\begin{Theorem}[{Equivalent conditions comonotonicity, \cite[Thm. 3]{DDGKV2001}}]\label{Equivalent conditions comonotonicity}
Let $(\Omega, \mathcal{F}, \P)$ be a probability space, $X,Y$ be two $\R$-valued random variables on $(\Omega, \mathcal{F}, \P)$ with distribution functions $F$ and $G$, respectively and joint distribution functions $H$. A random vector $(X,Y)$ is comonotonic if and only if one of the following equivalent conditions holds:
    \begin{enumerate}
     \item For all $(x,y) \in \R^{2}$, we have
        \begin{align*}
            H(x,y)= \min \{F(x), G(y)\}=M(F(x),G(y)).
        \end{align*}
        \item For $U \sim \cU(0,1)$, we have
        \begin{align*}
            (X,Y)\overset{d}{=} (F^{-1}(U), G^{-1}(U)).
        \end{align*}
    \end{enumerate}
\end{Theorem}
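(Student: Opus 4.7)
The plan is to establish the equivalence by a short cyclic chain: comonotonic $\Rightarrow$ (1) $\Rightarrow$ (2) $\Rightarrow$ comonotonic. The key ingredients are the standard generalized-inverse identity $F^{-1}(u)\le x \Leftrightarrow u\le F(x)$ (valid for every distribution function), together with the elementary observation that preimages of nondecreasing functions of the form $\{z\colon f(z)\le x\}$ are downward closed subsets of $\R$, hence totally ordered under inclusion.

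For comonotonic $\Rightarrow$ (1), I would assume $(X,Y) \stackrel{d}{=} (f(Z), g(Z))$ for some random variable $Z$ and nondecreasing $f,g$. Setting $A_x = \{z : f(z)\le x\}$ and $B_y = \{z : g(z)\le y\}$, monotonicity of $f$ and $g$ forces each of these sets to be a half-line of the form $(-\infty, t]$ or $(-\infty, t)$, so one is contained in the other. Therefore $\P(A_x \cap B_y) = \min\{\P(A_x), \P(B_y)\}$, and rewriting the right-hand side as $\min\{F(x), G(y)\} = M(F(x), G(y))$ yields (1).

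For (1) $\Rightarrow$ (2), I would directly compute the joint distribution function of $(F^{-1}(U), G^{-1}(U))$ with $U\sim \cU(0,1)$. Using the quantile identity twice,
\begin{align*}
 \P(F^{-1}(U)\le x,\ G^{-1}(U)\le y) &= \P(U\le F(x),\ U\le G(y)) \\
 &= \P(U\le \min\{F(x),G(y)\}) = M(F(x),G(y)).
\end{align*}
Since by (1) this coincides with the joint distribution $H$ of $(X,Y)$, and joint distribution functions determine the law on $\R^2$, we conclude $(X,Y) \stackrel{d}{=} (F^{-1}(U), G^{-1}(U))$. Finally, (2) $\Rightarrow$ comonotonic is immediate: taking $Z:=U$, $f:=F^{-1}$, $g:=G^{-1}$, both $f$ and $g$ are nondecreasing by definition of the generalized inverse, so the representation required in the definition of comonotonicity is exhibited.

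The only real care point is justifying that the level sets $A_x, B_y$ in the first implication are half-lines; this is where the nondecreasing hypothesis is used in an essential way, but it is a one-line argument (if $z\in A_x$ and $z'\le z$ then $f(z')\le f(z)\le x$). All other steps reduce to the quantile identity and to the fact that distribution functions on $\R^2$ determine laws, both of which are available as standard facts (and have been invoked throughout Section \ref{sec:prelim}). No deep obstacle is expected.
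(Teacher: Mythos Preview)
The paper does not prove this theorem at all; it is stated in Section~\ref{sec:prelim} as a quoted result from \cite{DDGKV2001} and used as a black box. So there is no ``paper's own proof'' to compare against.

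Your cyclic argument comonotonic $\Rightarrow$ (1) $\Rightarrow$ (2) $\Rightarrow$ comonotonic is correct and is essentially the standard one. The only step worth tightening slightly is the claim in the first implication that $A_x$ and $B_y$ are comparable under inclusion: you phrase this as ``half-lines, so one is contained in the other,'' but the cleaner (and more general) justification is exactly the one you already sketched in your preamble --- any two downward-closed subsets of $\R$ are comparable by inclusion (if $a\in A\setminus B$ and $b\in B\setminus A$, then $a\le b$ forces $a\in B$, contradiction). This covers the edge cases $A_x=\emptyset$ or $A_x=\R$ without having to list them. Everything else (the quantile identity $F^{-1}(u)\le x\Leftrightarrow u\le F(x)$, and that joint distribution functions determine laws on $\R^2$) is standard, as you note.
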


The following auxiliary statement is used in the proofs of our main results. Its proof is given in the appendix for smooth reading.

\begin{Proposition}[\cite{Nelsen}, Example $5.1$ ]\label{comonotonicity result}
 Let $X,Y$ be two random variables s.t. $(X,Y)$ is a comonotonic random vector. Then for any function $g$ such that $g(X,Y)\in L^1(\P)$,
 \begin{equation*}
  \E[g(X,Y)]=\int_{0}^1 g(F^{-1}(u),G^{-1}(u)) du.
 \end{equation*}
 where $F,G$ denote the distribution functions of $X$ and $Y$, respectively.
\end{Proposition}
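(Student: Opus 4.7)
The plan is to derive the identity directly from the equivalent characterization of comonotonicity recalled in Theorem \ref{Equivalent conditions comonotonicity}. Specifically, since $(X,Y)$ is comonotonic, condition (2) of that theorem provides a uniform random variable $U\sim\cU(0,1)$ (possibly on an enlarged probability space) such that
\begin{equation*}
 (X,Y)\stackrel{d}{=}(F^{-1}(U),G^{-1}(U)).
\end{equation*}
This is the only structural input I need; everything else will be a routine application of standard measure-theoretic transfer arguments.

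First I would note that the generalized inverses $F^{-1}$ and $G^{-1}$ are non-decreasing and hence Borel measurable on $(0,1)$, so that the composition $u\mapsto g(F^{-1}(u),G^{-1}(u))$ is Borel measurable whenever $g$ is. Next I would invoke the standard fact that two random vectors with the same distribution induce the same expectation for any measurable function of them (provided the integral exists); applied to the vector $(X,Y)$ and $(F^{-1}(U),G^{-1}(U))$, this gives
\begin{equation*}
 \E[g(X,Y)]=\E[g(F^{-1}(U),G^{-1}(U))],
\end{equation*}
where integrability on the right follows from the hypothesis $g(X,Y)\in L^1(\P)$ together with the equality in distribution.

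Finally I would apply the change-of-variables (transfer) theorem using that the law of $U$ on $(0,1)$ is Lebesgue measure, to rewrite
\begin{equation*}
 \E[g(F^{-1}(U),G^{-1}(U))]=\int_0^1 g(F^{-1}(u),G^{-1}(u))\,du,
\end{equation*}
which yields the claim. There is really no main obstacle here: the statement is essentially a corollary of Theorem \ref{Equivalent conditions comonotonicity}(2). The only thing to be slightly careful about is that the uniform variable $U$ may need to live on an enlarged probability space if $(\Omega,\F,\P)$ does not already support a uniform random variable; this is handled by the usual product-space extension and affects neither measurability nor the value of the expectations involved.
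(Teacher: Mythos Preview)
Your proposal is correct and follows essentially the same route as the paper: invoke Theorem~\ref{Equivalent conditions comonotonicity}(2) to write $(X,Y)\stackrel{d}{=}(F^{-1}(U),G^{-1}(U))$ and then compute the expectation as an integral against the law of $U$, which is Lebesgue measure on $(0,1)$. Your version is in fact slightly more careful (measurability of the generalized inverses, possible enlargement of the probability space), but the argument is the same.
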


\begin{Remark}
There are two equivalent definitions regarding the notion of Wasserstein distance. Namely, for $\mu,\nu\in P_p(\R^d)$, we have
\begin{displaymath}
 W_p^p(\mu,\nu)=\inf_{\pi\in\Pi(\mu,\nu)}\int_{\R^d\times\R^d}\|x-y\|^p\pi(dx,dy)
\end{displaymath}
where the expression is minimized over all couplings of $\mu,\nu$. On the other hand, we may express $W_p$ in terms of random vectors:
\begin{displaymath}
 W_p^p(\mu,\nu)=\inf_{X,Y}\E\big[\|X-Y\|^p\big]
\end{displaymath}
where the infimum is taken over all random variables $X,Y$ with $X\sim\mu$ and $Y\sim\nu$. We will use both definitions in the proofs, whichever is most convenient in any given situation.
\end{Remark}

\section{Proofs of Theorems \ref{main theorem}, \ref{thm:to_be_deleted} and Proposition \ref{Representation for measures with the copula M}}\label{sec:main_proofs}

We are now ready to prove the main results of this work. We start with the proof of Theorem \ref{main theorem}, which is then used to prove the multi-dimensional extension in Theorem \ref{main theorem 2}.

The proof of Theorem \ref{main theorem} will be divided into two parts: first of all we will prove the following one-dimensional result in Proposition \ref{prop:p4.1} for $p=1$ using partially ideas from the  article by Vallender \cite{vallander_ru}, then we consider $p>1$ using partially ideas from the book by dall'Aglio \cite{DALL’AGLIO}. The final result in Proposition \ref{prop:p4.1} can be found in both articles. Here we combine however the theory of Wasserstein metrics and copula for the proof.
\begin{Proposition}\label{prop:p4.1}
    Let $\mu, \nu \in P_{1}(\R) $, $F$ and $G$ be the associated distribution functions. Then
    \begin{align*}
        W_{1}(\mu,\nu)&= \int_{-\infty}^{\infty} |F(x)-G(x)| dx
        \\
        &=\int_{0}^{1} |F^{-1}(u)-G^{-1}(u)| du.
    \end{align*}
\end{Proposition}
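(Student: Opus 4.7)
The plan is to establish a sandwich
\begin{equation*}
 \int_{\R}|F(x)-G(x)|\,dx \;\leq\; W_1(\mu,\nu) \;\leq\; \int_0^1|F^{-1}(u)-G^{-1}(u)|\,du,
\end{equation*}
and then to show by a Fubini argument that the two outer quantities coincide, which forces equality throughout and yields both identities simultaneously. This is the way to make the copula-theoretic viewpoint visible: the upper bound is obtained by explicitly picking the comonotonicity copula as coupling, while the lower bound is a classical Hoeffding/Fr\'echet-type identity for any joint distribution function with given margins.

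For the upper bound I would invoke Sklar's theorem (Theorem \ref{Sklar's theorem}) with the $M$-copula to construct a coupling $\pi_M$ of $\mu$ and $\nu$ whose joint distribution function is $M(F(x),G(y))=\min\{F(x),G(y)\}$. By Theorem \ref{Equivalent conditions comonotonicity}, a random vector with this distribution is comonotonic and has the representation $(F^{-1}(U),G^{-1}(U))$ with $U\sim\cU(0,1)$. Proposition \ref{comonotonicity result} applied to $g(x,y)=|x-y|$ then yields
\begin{equation*}
 \E|F^{-1}(U)-G^{-1}(U)| \;=\; \int_0^1|F^{-1}(u)-G^{-1}(u)|\,du,
\end{equation*}
which is an admissible upper bound for $W_1(\mu,\nu)$ by the very definition of the Wasserstein distance.

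For the lower bound I would take an arbitrary coupling $\pi\in\Pi(\mu,\nu)$ with random variables $(X,Y)\sim\pi$, and use the layer-cake identity $|X-Y|=\int_{\R}\mathbf{1}_{\{X\leq t<Y\}}\,dt+\int_{\R}\mathbf{1}_{\{Y\leq t<X\}}\,dt$. Fubini gives
\begin{equation*}
 \E|X-Y|=\int_{\R}\bigl[\P(X\leq t,Y>t)+\P(Y\leq t,X>t)\bigr]\,dt.
\end{equation*}
Since $\P(X\leq t,Y>t)-\P(X>t,Y\leq t)=F(t)-G(t)$ and both summands are nonnegative, the bracket is at least $|F(t)-G(t)|$; taking the infimum over couplings gives $W_1(\mu,\nu)\geq\int_\R|F-G|\,dx$.

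The final bridging step is to establish $\int_0^1|F^{-1}(u)-G^{-1}(u)|\,du=\int_{\R}|F(x)-G(x)|\,dx$, closing the sandwich. I would write $|F^{-1}(u)-G^{-1}(u)|=\int_{\R}\mathbf{1}_{A(u)}(x)\,dx$ with $A(u)$ the interval between $F^{-1}(u)$ and $G^{-1}(u)$, swap the order of integration, and evaluate the resulting $u$-section at fixed $x$ by means of the standard Galois-type identity $F^{-1}(u)\leq x\Longleftrightarrow u\leq F(x)$ (valid because $F$ is right-continuous), which shows that the section has Lebesgue measure $|F(x)-G(x)|$. The main technical obstacle I expect lies precisely here: the inequality for generalized inverses is slightly asymmetric at jump points of $F$ or $G$, so I would verify that the sets involved differ only by a $u$-null set and the Fubini computation is unaffected. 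Once this identity is in place, both claimed equalities in the proposition are immediate from the sandwich.
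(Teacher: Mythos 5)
Your proof is correct, and it is recognizably close to the paper's argument while differing in how the pieces are assembled. Both proofs start from the same identity (your Fubini/layer-cake computation is exactly Vallender's formula $\E|X-Y|=\int_\R[F(t)+G(t)-2H(t,t)]\,dt$, since $\P(X\le t,Y>t)=F(t)-H(t,t)$ and $\P(Y\le t,X>t)=G(t)-H(t,t)$, and the paper cites rather than reproves it), and both use the comonotone coupling $(F^{-1}(U),G^{-1}(U))$ to attain the infimum. The differences are twofold. First, for the lower bound the paper invokes the Fr\'echet--Hoeffding upper bound $H(t,t)=C(F(t),G(t))\le M(F(t),G(t))$ and then simplifies $F+G-2\min\{F,G\}=|F-G|$, whereas you use the elementary inequality $a+b\ge|a-b|$ for $a,b\ge0$; these are literally equivalent (the inequality $F+G-2H\ge|F-G|$ is the same as $H\le\min\{F,G\}$), but your phrasing bypasses the copula language that the paper is deliberately foregrounding. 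Second, the paper never needs your bridging identity $\int_0^1|F^{-1}(u)-G^{-1}(u)|\,du=\int_\R|F(x)-G(x)|\,dx$: it evaluates the \emph{same} optimal coupling in two ways (once through the Vallender identity, once through Proposition \ref{comonotonicity result}), so the two representations coincide automatically because both equal $\E|\tilde X-\tilde Y|$. Your sandwich instead has different expressions on the two sides and therefore genuinely requires the Galois-inequality/Fubini lemma to close; you correctly identify the jump-point subtlety there, and it is harmless since $\{u:F^{-1}(u)\le x\}=\{u:u\le F(x)\}$ up to a single endpoint. Your route costs one extra lemma but has the merit of proving the starting identity rather than citing it; the paper's route is shorter once Vallender's formula is granted and keeps the comonotonicity copula visible at every step.
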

\begin{proof}
 By \cite[Theorem 14.1]{Billingsley}, we get that for $F$ and $G$ two distribution functions there exists on some probability space $(\Omega, \mathcal{F}, \P)$ two random variables $X$ and $Y$  such that $X\sim \mu$ and $Y \sim \nu$. Since $\mu,\nu\in P_1(\R)$, we have $W_{1}(\mu,\nu)<\infty$ and $\E|X-Y|<\infty$. Under these conditions, it was proven in \cite{vallander_ru} that 
 \begin{align*}
  \E|X-Y| &= \int_{-\infty}^{\infty} \left[ P(X\leq y)+P(Y\leq y)-2P(X\leq y, Y\leq y)  \right] dy
  \\
  &= \int_{-\infty}^{\infty} [F(y)+G(y)-2H(y,y)]dy
 \end{align*}
 where $H$ is the joint distribution function of the random variables $X$ and $Y$.
 
 Denoting by $C$ the copula corresponding to $X$ and $Y$, by Theorems \ref{Sklar's theorem} and \ref{Fréchet-Hoeffding bounds}, we get
    \begin{align*}
         \int_{-\infty}^{\infty} [F(y)+G(y)-2H(y,y)]dy &= \int_{-\infty}^{\infty} [F(y)+G(y)-2C(F(y),G(y))]dy
         \\
         &\geq  \int_{-\infty}^{\infty} [F(y)+G(y)-2M(F(y),G(y))]dy
    \end{align*}
    Taking the infimum over all possible $X\sim\mu$ and $Y\sim\nu$, this yields
    \begin{displaymath}
     W_1(\mu,\nu)\geq\int_{-\infty}^\infty [F(y)+G(y)-2M(F(y),G(y))]dy.
    \end{displaymath}
    It is left to show the inequality in the other direction. To this end, let $(\tilde{X},\tilde{Y})$ be a comonotone random vector with distribution $\mu$ and $\nu$, respectively. By Theorem \ref{Equivalent conditions comonotonicity}, its copula is given by $M$. Therefore,
    \begin{align*}
       W_1(\mu,\nu)=\inf_{\substack{X\sim\mu,\\Y\sim\nu}} \E |X-Y| \leq \E|\tilde{X}-\tilde{Y}| = \int_{-\infty}^{\infty} [ F(y)+G(y)-2M(F(y), G(y))] dy.
   \end{align*}
    
    Putting these inequalities together, we arrive at
   \begin{align*}
       W_1(\mu,\nu) = \E|\tilde{X}-\tilde{Y}| = \int_{-\infty}^{\infty} \left[ F(y)+G(y)-2M(F(y), G(y))\right] dy.
   \end{align*}
   Hence, by the definition of $M$, we get
    \begin{align*}
      W_{1}(\mu,\nu)= \int_{-\infty}^{\infty} |F(y)-G(y)| dy.
    \end{align*}
    Furthermore, it follows from Proposition \ref{comonotonicity result} that
    \begin{align*}
        W_{1}(\mu,\nu)= E|\tilde{X}-\tilde{Y}|&= \int_{0}^{1} |F^{-1}(u)-G^{-1}(u)| du.
    \end{align*}
\end{proof}

Now we move to the case where $p>1$. Here, we will use previous results by dall'Aglio \cite{DALL’AGLIO}. A similar approach may also be used to prove the case $p=1$, but the proof of Proposition \ref{prop:p4.1} illustrates nicely the significance of the $M$-copula together with comonotonicity of the corresponding random variables.


Fix $\mu,\nu$ with associated distribution functions $F$ and $G$. In spirit of \cite{DALL’AGLIO}, for $p>1$, define
\begin{align*}
   I(H):= \int_{-\infty}^{\infty} \int_{-\infty}^{\infty} \lvert y-x\rvert ^{p} dH(x,y) 
\end{align*}
 which is the expression to be minimized in the Wasserstein metric, i.e.,
\begin{displaymath}
 W_p^p(\mu,\nu)=\inf_{H}\int_{-\infty}^\infty\int_{-\infty}^\infty|y-x|^pdH(x,y)
\end{displaymath}
where the infimum runs over all bivariate distribution functions $H$ with margins $F$ and $G$.

By assuming \eqref{conditions on the tails} on the tails of the margins $F$ and $G$, dall'Aglio came to prove that the double integral above can be minimized. By Theorem \ref{tails}, it is sufficient for \eqref{conditions on the tails} to hold to have moment assumptions of the corresponding order. While this seems like a restriction at first glance, note that we are working on the Wasserstein space of order $p\in[1,\infty)$, which already implies the existence of moments of order $p$.

\begin{Proposition}[{\cite[Equation (30)]{DALL’AGLIO}}]\label{First proposition by Giorgio dall'aglio}
Let $\mu,\nu \in P_{p}(\mathcal{\R})$ with associated distribution functions $F$ and $G$, respectively, and $H$ be a joint distribution function with margins $F$ and $G$. Then
\begin{equation}\label{Dall'Aglio equality}
\begin{split} 
    I(H)&= p (p -1)\int_{-\infty}^{\infty} \int_{y}^{\infty}[G(y)-H(x,y)](x-y)^{p -2} dxdy
    \\
    &\hspace{20pt}+ p (p -1)\int_{-\infty}^{\infty} \int_{x}^{\infty}[F(x)-H(x,y)](y-x)^{p -2} dydx.
\end{split}
\end{equation}
\end{Proposition}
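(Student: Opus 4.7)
The plan is to prove the identity by moving from the right-hand side back to $I(H)$ via Fubini/Tonelli and a one-line elementary identity for $a^p$. First I would introduce random variables $(X,Y)\sim H$ on some probability space (so that $X\sim\mu$ and $Y\sim\nu$) and rewrite the two integrands using the probabilistic interpretation
\begin{align*}
 G(y)-H(x,y)&=\P(Y\leq y)-\P(X\leq x,Y\leq y)=\P(X>x,\,Y\leq y),\\
 F(x)-H(x,y)&=\P(X\leq x,\,Y>y).
\end{align*}
These quantities are nonnegative, which permits an unrestricted application of Tonelli's theorem throughout.

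Next I would establish the elementary double-integral representation: for any real $a>0$ and $p>1$,
\begin{equation*}
 a^p=p(p-1)\int_0^a\int_0^u (u-v)^{p-2}\,dv\,du,
\end{equation*}
which follows from computing the inner integral as $pu^{p-1}$ and then integrating. Translated to our setting, for $X>Y$ one has
\begin{equation*}
 (X-Y)^p=p(p-1)\int_Y^X\int_Y^u(u-v)^{p-2}\,dv\,du=p(p-1)\iint_{\{Y\leq y<x<X\}}(x-y)^{p-2}\,dx\,dy,
\end{equation*}
while the integral on the right vanishes when $X\leq Y$. Multiplying by $\mathbf{1}_{X>Y}$ and taking expectations gives
\begin{equation*}
 \E\bigl[(X-Y)^p\mathbf{1}_{X>Y}\bigr]=p(p-1)\,\E\!\iint_{\{y<x\}} \mathbf{1}_{X>x,\,Y\leq y}(x-y)^{p-2}\,dx\,dy.
\end{equation*}
Swapping expectation and integration by Tonelli (legitimate since the integrand is nonnegative) and using the probabilistic identity from the first paragraph yields exactly the first term on the right-hand side of \eqref{Dall'Aglio equality}. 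An entirely symmetric argument, with the roles of $X$ and $Y$ (equivalently of the regions $\{x>y\}$ and $\{y>x\}$) exchanged, produces the second term and equals $\E[(Y-X)^p\mathbf{1}_{Y>X}]$.

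Finally I would add the two identities and observe that
\begin{equation*}
 \E\bigl[(X-Y)^p\mathbf{1}_{X>Y}\bigr]+\E\bigl[(Y-X)^p\mathbf{1}_{Y>X}\bigr]=\E|X-Y|^p=I(H),
\end{equation*}
which is the desired equality. The main subtlety is not the Tonelli step itself (the integrands are nonnegative, so the swap is always valid) but the finiteness of all the quantities involved: for $1<p<2$ the kernel $(x-y)^{p-2}$ is singular on the diagonal, so one must check that the resulting iterated integrals are finite. This is where the assumption $\mu,\nu\in P_p(\R)$ enters: it gives $\E|X|^p,\E|Y|^p<\infty$ and hence $I(H)<\infty$, so by the very identity we are proving all three integrals coincide with a finite number, and the diagonal singularity is tamed by the factor $\P(X>x,Y\leq y)$ (respectively $\P(X\leq x,Y>y)$), which collapses the near-diagonal integral $\int_y^{y+\e}(x-y)^{p-2}dx$ against a bounded probability into something integrable since $p-1>0$.
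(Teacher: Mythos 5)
Your proof is correct. Note that the paper does not actually prove this Proposition; it is imported verbatim as Equation (30) of dall'Aglio's paper, where the original derivation proceeds by manipulations of Stieltjes integrals. Your argument is therefore a genuinely self-contained alternative, and a clean one: the probabilistic rewriting $G(y)-H(x,y)=\P(X>x,\,Y\leq y)$ and $F(x)-H(x,y)=\P(X\leq x,\,Y>y)$, combined with the elementary kernel identity $a^p=p(p-1)\int_0^a\int_0^u(u-v)^{p-2}\,dv\,du$ and Tonelli, reduces everything to the decomposition $\E|X-Y|^p=\E[(X-Y)^p\mathbf{1}_{X>Y}]+\E[(Y-X)^p\mathbf{1}_{Y>X}]$. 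All steps check out: the inner integral $\int_0^u(u-v)^{p-2}\,dv=u^{p-1}/(p-1)$ converges for every $p>1$ (including $1<p<2$, where the kernel is singular but integrable on the diagonal), the integrands are nonnegative so the interchange of expectation and integration needs no integrability hypothesis, and the identity is then valid in $[0,\infty]$, with finiteness supplied by $\mu,\nu\in P_p(\R)$ exactly as you say. Your closing remark is slightly more cautious than necessary — since Tonelli holds unconditionally for nonnegative integrands, the equality itself never requires the finiteness check; the moment assumption is only needed so that the statement is not the vacuous $\infty=\infty$. What your route buys over the cited one is that it makes transparent why the factor $p(p-1)$ and the exponent $p-2$ appear, and it avoids any integration by parts or boundary-term analysis at $\pm\infty$ (which is where dall'Aglio needs the tail condition \eqref{conditions on the tails}).
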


The proposition now provides a useful representation of $I$ to find the minimizer. The statement below was proven by Girogio dall'Aglio, however ignoring the concept of Wasserstein metrics and Wasserstein  space $P_{p}(\R)$. Our proof refers however explicitly to these concepts.
\begin{Theorem}[{\cite[Teorema IX]{DALL’AGLIO}}]\label{First theorem by Giorgio dall'aglio} Let $\mu,\nu \in P_{p}(\R)$ with associated distribution functions $F$ and $G$, respectively, and $H$ be a joint distribution function with margins $F$ and $G$. Then for all $p> 1$, the integral
\begin{align*}
I(H)=\int_{-\infty}^{\infty}\int_{-\infty}^{\infty}\lvert y-x\rvert^{p} dH(x,y) 
\end{align*}
is minimized by the function
\begin{equation*}
    M(F(x),G(y))=\min\{F(x),G(y)\}, 
\end{equation*}
i.e.,
 \begin{align*}
     W_p^p(\mu,\nu)=\inf_H I(H)=I(M)=\int_{-\infty}^{\infty}\int_{-\infty}^{\infty}\lvert y-x\rvert^{p} dM(F(x),G(y)) 
 \end{align*}
\end{Theorem}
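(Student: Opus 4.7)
The plan is to reduce the statement to a pointwise comparison of the integrand, using the representation of $I(H)$ furnished by Proposition \ref{First proposition by Giorgio dall'aglio} together with the upper Fr\'echet-Hoeffding bound. I would first fix any joint distribution function $H$ with margins $F$ and $G$ and rewrite $I(H)$ via \eqref{Dall'Aglio equality}, noting that for $p>1$ the weights $(x-y)^{p-2}$ and $(y-x)^{p-2}$ are non-negative on their respective regions of integration, and that the bracketed quantities $G(y)-H(x,y)$ and $F(x)-H(x,y)$ are both non-negative, since $H$ is bounded above by each of its margins.

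Next I would invoke Sklar's theorem (Theorem \ref{Sklar's theorem}) to write $H(x,y)=C_{H}(F(x),G(y))$ for some copula $C_{H}$, and apply the upper Fr\'echet-Hoeffding bound (Theorem \ref{Fréchet-Hoeffding bounds}) to deduce
\begin{equation*}
H(x,y) \;\leq\; M(F(x),G(y)) \qquad \text{for all } (x,y)\in\R^{2}.
\end{equation*}
Substituting this pointwise inequality into both summands on the right-hand side of \eqref{Dall'Aglio equality} and using the non-negativity of the weights and the bracketed terms, one obtains termwise
\begin{equation*}
I(H) \;\geq\; I\bigl(M(F(\cdot),G(\cdot))\bigr).
\end{equation*}

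To close the argument, I would check that $(x,y)\mapsto M(F(x),G(y))$ is itself an admissible joint distribution function with margins $F$ and $G$; this is precisely the second half of Sklar's theorem applied to the copula $M$. By Theorem \ref{Equivalent conditions comonotonicity} this joint distribution is realized by the comonotone pair $(F^{-1}(U),G^{-1}(U))$ with $U\sim\cU(0,1)$, so that $I(M)=\E|F^{-1}(U)-G^{-1}(U)|^{p}$, which is finite by the $P_{p}$-assumption via Proposition \ref{comonotonicity result}. Taking the infimum over all admissible $H$ then yields
\begin{equation*}
W_{p}^{p}(\mu,\nu) \;=\; \inf_{H} I(H) \;=\; I(M) \;=\; \int_{-\infty}^{\infty}\int_{-\infty}^{\infty}|y-x|^{p}\,dM(F(x),G(y)).
\end{equation*}

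The step I expect to be the most delicate is justifying the termwise pointwise comparison inside \eqref{Dall'Aglio equality} when $p\in(1,2)$, because the kernel $|x-y|^{p-2}$ develops an integrable singularity along the diagonal. Since however $I(M)$ is already known to be finite and every integrand occurring in \eqref{Dall'Aglio equality} is non-negative, Tonelli's theorem legitimates the inequality $I(H)\geq I(M)$ without any cancellation issues, and the remaining claims follow as described.
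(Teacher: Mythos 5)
Your argument is correct and follows essentially the same route as the paper's proof: both rewrite $I(H)$ via Proposition \ref{First proposition by Giorgio dall'aglio}, observe that the bracketed terms and the kernels $|x-y|^{p-2}$ are non-negative, and conclude via the Fr\'echet--Hoeffding upper bound that the pointwise maximizer $M(F(\cdot),G(\cdot))$ minimizes $I$, finishing with Proposition \ref{comonotonicity result}. Your additional checks --- that $M(F(\cdot),G(\cdot))$ is an admissible joint distribution function by the converse part of Sklar's theorem, and that Tonelli handles the integrable diagonal singularity for $p\in(1,2)$ --- are welcome refinements of the same argument rather than a different approach.
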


\begin{proof}
  Obviously, we have $G(y)\geq H(x,y)$ for all $x$ and $y$. Therefore, the expression
 \begin{align*}
  p (p -1)\int_{-\infty}^{\infty}\int_{y}^{\infty}[G(y)-H(x,y)](x-y)^{p -2} dx dy
 \end{align*}
 is minimized when $H$ is maximized, in other words, when $H$ is equal to the Fr\'echet-Hoeffding upper bound $M$.
 
 By the same argument, using $F(x)\geq H(x,y)$, we get that
 \begin{align*}
  p (p -1)\int_{-\infty}^{\infty} \int_{x}^{\infty}[F(x)-H(x,y)](y-x)^{p -2} dydx
 \end{align*}
 is minimized when $H$ is maximized by the Fr\'echet-Hoeffding upper bound $M$.
 
 Putting these arguments together and using Proposition \ref{First proposition by Giorgio dall'aglio}, we get
 \begin{align*}
     W_p^p(\mu,\nu)=\inf_H I(H)=I(M)=\int_{-\infty}^{\infty}\int_{-\infty}^{\infty}\lvert y-x\rvert^{p} dM(F(x),G(y)) 
 \end{align*}
 and it follows as before from Proposition \ref{comonotonicity result} that 
 \begin{align*}
     W_{p}^{p}(\mu,\nu)=\E[|X-Y|^{p}]=\int_{0}^{1}|F^{-1}(u)-G^{-1}(u)|^{p} du
 \end{align*}
 Where $X$ and $Y$ are two comonotone random variables on $(\Omega, \mathcal{F}, \P)$ with joint distribution function $M(F(x),G(y))$.
\end{proof}

Now we prove Proposition \ref{Representation for measures with the copula M}.
\begin{proof}[Proof of Proposition \ref{Representation for measures with the copula M}]
Let $d>1$ and  $p \in [1,\infty)$. Denote by $\pi_i$ the two-dimensional margins of $\pi$ in $x_i$ and $y_i$, i.e., for any Borel sets $A,B\in\cB(\R)$,
\begin{displaymath}
 \pi_i(A,B)=\pi(\R\times\dotsb\times\R\times A\times\R\times\dotsb\times\R\times B\times\R\dotsb\times\R)
\end{displaymath}
where $A$ and $B$ appear on the $i$-th and $d+i$-th coordinate, respectively. Furthermore, for $\mu,\nu\in P_p(\R^d)$ sharing the same comonotonicity copula $M$, we denote by $\mu_i$ and $\nu_i$ their one-dimensional margins on the $i$-th coordinate. By Proposition \ref{Projection} 
\begin{align*}
    W_{p}^{p}(\mu,\nu)&=\inf_{\pi \in \Pi(\mu,\nu)}\int_{\R^{d}} \int_{\R^{d}} \left \|  x-y\right \|_{p}^{p} \pi(dx, dy)
    \\
    &= \inf_{\pi \in \Pi(\mu,\nu)}  \int_{\R^{d}} \int_{\R^{d}} \sum_{i=1}^{d} |x_{i}-y_{i}|^{p} \pi(dx, dy)
    \\ 
    &\geq  \sum_{i=1}^{d} \inf_{\pi \in \Pi(\mu,\nu)} \int_{\R^{d}} \int_{\R^{d}} |x_{i}-y_{i}|^{p} \pi(dx, dy)
    \\
    &\geq\sum_{i=1}^{d} \inf_{\substack{\pi\in P(\R^{2d})\colon\\\pi_{i} \in \Pi(\mu_i,\nu_i)}} \int_{\R^d} \int_{\R^d} |x_{i}-y_{i}|^{p} \pi(dx, dy)
    \\
    &=\sum_{i=1}^{d} \inf_{\pi_{i} \in \Pi(\mu_i,\nu_i)} \int_{\R} \int_{\R} |x_{i}-y_{i}|^{p} \pi_{i}(dx_{i}, dy_{i})
    \\
    &= \sum_{i=1}^{d} \int_{\R} \int_{\R} |x_{i}-y_{i}|^{p} dM(F_{i}(x_{i}), G_{i}(y_{i})),
\end{align*}
where the last step holds by Theorem \ref{main theorem}. Next, denote by $H_{\mu}$ and $H_{\nu}$ the joint distribution functions induced by $\mu$ and $\nu$, respectively, i.e., for all $(x_{1},\dotsc,x_{d}, y_{1},\dotsc,y_{d}) \in \R^{2d}$,
\begin{align*}
    H_{\mu}(x_{1},\dotsc,x_{d})&=\mu\big((-\infty,x_{1}]\times\dotsb\times (-\infty,x_{d}]\big) = M(F_{1}(x_{1}),\dotsc,F_{d}(x_{d}))
    \\
    H_{\nu}(y_{1},\dotsc,y_{d}) &= \nu\big((-\infty,y_{1}]\times\dotsb\times (-\infty,y_{d}]\big) = M(G_{1}(y_{1}),\dotsc,G_{d}(y_{d})),
\end{align*}
and set
\begin{align*}
    H_{\mu,\nu}(x_{1},\dotsc,x_{d},y_{1},\dotsc,y_{d}):= M(H_{\mu}(x_{1},\dotsc,x_{d}), H_{\nu}(y_{1},\dotsc,y_{d})).
\end{align*}
Then
\begin{align*}
    H_{\mu,\nu}(\infty,\dotsc,\infty,x_{i},\infty,\dotsc,\infty,y_{i},\infty,\dotsc,\infty)=M(F_{i}(x_{i}), G_{i}(y_{i})).
\end{align*}
Furthermore, the copula corresponding to $H_{\mu,\nu}$ by Sklar's Theorem \ref{Sklar's theorem} is given by
\begin{displaymath}
 C_{\mu,\nu}(u_1,\dotsc,u_d,v_1,\dotsc,v_d)=M(M(u_1,\dotsc,u_d),M(v_1,\dotsc,v_d)).
\end{displaymath}
This function $C_{\mu,\nu}\colon[0,1]^{2d}\to[0,1]$ is in fact a copula by \cite[Theorem 3.5.3]{Nelsen}. Therefore, again by Theorem \ref{Sklar's theorem}, $H_{\mu,\nu}\colon\R^{2d}\to[0,1]$ is a $2d$-joint distribution function, which induces a coupling of $\mu$ and $\nu$, which can be seen as follows:
\begin{align*}
    H_{\mu,\nu}(x_1,\dotsc,x_d,\infty,\dotsc,\infty)&=M(H_{\mu}(x_{1},\dotsc,x_{d}),H_\nu(\infty,\dotsc,\infty))
    \\
    &= M(H_{\mu}(x_{1},\dotsc,x_{d}),1)
    \\
    &= H_{\mu}(x_{1},\dotsc,x_{d}).
\end{align*}
Similarly,
\begin{align*}
    H_{\mu,\nu}(\infty,\dotsc,\infty,y_1,\dotsc,y_d)=H_{\nu}(y_{1},\dotsc,y_{d}).
\end{align*}
Going back to the proof, we have 
\begin{align*}
    W_{p}^{p}(\mu,\nu) &\geq \sum_{i=1}^d \int_{\R} \int_{\R} |x_{i}-y_{i}|^{p} dM(F_{i}(x_{i}), G_{i}(y_{i}))
    \\
    &= \sum_{i=1}^d \int_{\R^{d}} \int_{\R^{d}} |x_{i}-y_{i}|^{p} dH_{\mu,\nu}(x,y)
    \\
    &\geq \inf_{\pi \in \Pi(\mu,\nu)} \int_{\R^{d}} \int_{\R^{d}} \left \| x-y \right \|_{p}^{p} \pi(dx dy) = W_{p}^{p}(\mu,\nu)
\end{align*}
\end{proof}

\begin{proof}[Proof of Theorem \ref{thm:to_be_deleted}]
Following the first calculation of the proof of Proposition \ref{Representation for measures with the copula M} and applying \cite[Theorem 5]{BDS}, we have 
\begin{align*}
   W_{p}^{p}(\mu,\nu)&=\sum_{i=1}^{d} \int_{\R} \int_{\R} |x_{i}-y_{i}|^{p} dM(F_{i}(x_{i}), G_{i}(y_{i}))
   \\ 
    &= \sum_{i=1}^{d}  W_{p}^{p}(\mu_{i},\nu_{i})=  W_{p}^{p}(\mu,\nu).
\end{align*}
By above calculation together with Theorem \ref{main theorem}, we get
\begin{displaymath}
 W_p^p(\mu,\nu)=\sum_{i=1}^d\int_\R\int_\R|x_i-y_i|^pdM(F_i(x_i),G_i(y_i))=\sum_{i=1}^d\int_0^1|F_{i}^{-1}(u)-G_{i}^{-1}(u)|^pdu.
\end{displaymath}
\end{proof}



{\bf Acknowledgments.} We thank Dennis Schroers (University Bonn) for very fundamental and necessary remarks to a first version of this article. We thank also Stefano Bonaccorsi (University Trento) for giving to our attention the reference \cite{DALL’AGLIO} at the beginning of this work and for many important comments. 

\appendix

\section{Proof of Proposition \ref{comonotonicity result}, marginal integration}\label{app:a}

Here we prove Proposition \ref{comonotonicity result}.
\begin{proof}[Proof of Proposition \ref{comonotonicity result}]
   By using $(b)$ of Theorem \ref{Equivalent conditions comonotonicity}, we can write, for $U \sim \cU(0,1)$
   \begin{align*}
        \E[g(X,Y)]&= \E \left [ g(F^{-1}(U), G^{-1}(U))  \right ].
   \end{align*}
   By letting $\phi$ be the distribution function of $U$, we get
   \begin{align*}
       \E \left [ g(F^{-1}(U), G^{-1}(U))  \right ]&=\int_{-\infty}^{\infty} g(F^{-1}(u),G^{-1}(u)) d\phi(u) \\ &= \int_{0}^{1} g(F^{-1}(u), G^{-1}(u)) du.
   \end{align*}
    The last equation follows since $U$ is uniformly distributed on $(0,1)$.
\end{proof}

Finally, we comment on how the integration of a function $f\colon\R^d\to\R$ with respect to $\R^{2d}$ can be reduced to the integration w.r.t. the corresponding marginal on $\R^d$.
\begin{Proposition}\label{Projection}
 Let $\mu$ and $\nu$ be the margins of $\pi$ such that $\mu(A)=\pi(A \times \R^{d})$ and $\nu(B)=\pi(\R^{d} \times B)$ for all $A,B \in \mathcal{B}(\R^{d})$. Then for any integrable function $f\colon\R^d\to\R$,
 \begin{equation}\label{eq:appendix_margins}
   \int_{\R^{d} \times \R^{d}} f(x) \pi(dx,dy)=\int_{\R^{d}} f(x) \mu(dx)
 \end{equation}
\end{Proposition}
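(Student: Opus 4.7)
My plan is to apply the standard measure-theoretic bootstrap (the \emph{standard machine}): verify the identity for indicator functions, extend by linearity to simple functions, lift to nonnegative measurable functions via monotone convergence, and finally to integrable functions by the decomposition $f = f^+ - f^-$.

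For the base case, let $A \in \cB(\R^d)$ and take $f = \1_A$. Viewing $\1_A$ as a function on $\R^d \times \R^d$ that depends only on the first coordinate, we compute
\begin{align*}
 \int_{\R^d \times \R^d} \1_A(x)\, \pi(\dm x, \dm y) = \pi(A \times \R^d) = \mu(A) = \int_{\R^d} \1_A(x)\, \mu(\dm x),
\end{align*}
where the second equality is exactly the marginal assumption on $\pi$. By linearity of the integral, the identity then extends to any nonnegative simple function $f = \sum_{k=1}^n c_k \1_{A_k}$ with $c_k \geq 0$ and $A_k \in \cB(\R^d)$.

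Next, for a general nonnegative measurable $f\colon \R^d \to \R$, pick an increasing sequence of nonnegative simple functions $f_n \nearrow f$ (for instance the standard dyadic approximants). Since both sides of \eqref{eq:appendix_margins} are monotone in $f$, the monotone convergence theorem applied to $\pi$ on the left and to $\mu$ on the right, together with the simple-function case, gives
\begin{align*}
 \int_{\R^d \times \R^d} f(x)\, \pi(\dm x, \dm y) = \lim_{n \to \infty} \int_{\R^d \times \R^d} f_n(x)\, \pi(\dm x, \dm y) = \lim_{n \to \infty} \int_{\R^d} f_n(x)\, \mu(\dm x) = \int_{\R^d} f(x)\, \mu(\dm x).
\end{align*}
Finally, for an arbitrary $\mu$-integrable $f$, split $f = f^+ - f^-$ and observe that the previous step applied to $|f|$ shows $|f|$ is $\pi$-integrable as a function on $\R^d \times \R^d$ (with value depending only on the first coordinate) iff it is $\mu$-integrable; hence subtracting the identities for $f^+$ and $f^-$ yields the claim.

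There is no real obstacle here: the only minor point is to ensure that $f$ lifted to $\R^d \times \R^d$ (by ignoring $y$) is measurable with respect to the product $\sigma$-algebra, which is immediate since the projection $(x,y) \mapsto x$ is measurable. The argument is essentially the definition of the pushforward/image measure under this projection, specialized to the observation that the pushforward of $\pi$ under the first projection is precisely $\mu$.
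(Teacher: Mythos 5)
Your proof is correct and follows essentially the same route as the paper's: both proceed by the standard machine (indicator functions, simple functions, monotone convergence, then the decomposition $f = f^+ - f^-$). Your closing remark that the identity is just the change-of-variables formula for the pushforward of $\pi$ under the first projection is a nice conceptual summary that the paper does not state explicitly, but the argument itself is the same.
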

\begin{proof}
The statement is proven in multiple steps. We start by proving \eqref{eq:appendix_margins} for indicator functions, then continue with increasingly complex functions.
\begin{enumerate}
 \item Let $f=1_{A}, A \in \mathcal{B}(\R^{d})$
 \begin{align*}
  \int_{\R^{d} \times \R^{d}} f(x) \pi(dx,dy)&= \int_{\R^{d} \times \R^{d}} 1_{A}(x) \pi(dx,dy)
  \\
  &= \int_{A \times \R^{d}} \pi(dx,dy)
  \\
  &= \pi(A \times \R^d) = \mu(A)
  \\
  &= \int_{A} \mu(dx)=\int_{\R^{d}} 1_{A}(x) \mu(dx) \\
  &= \int_{\R^{d}} f(x) \mu(dx).
 \end{align*}
 \item Let $f=\sum_{i=1}^{n} \alpha_{i} 1_{A_{i}}, \alpha_i \in \R, A_{i} \in \mathcal{B}(\R^{d})$. Then
 \begin{align*}
  \int_{\R^{d} \times \R^{d}} f(x) \pi(dx,dy)&= \int_{\R^{d} \times \R^{d}} \sum_{i=1}^{n} \alpha_{i} 1_{A_{i}}(x) \pi(dx,dy)
  \\
  &= \sum_{i=1}^{n} \alpha_{i} \int_{A_{i} \times \R^{d}} \pi(dx,dy)
  \\
  &= \sum_{i=1}^{n} \alpha_{i}\pi[A_{i} \times \R^{d}]= \sum_{i=1}^{n} \alpha_{i}\mu(A_{i})
  \\
  &= \sum_{i=1}^{n} \alpha_{i}\int_{A_{i}} \mu(dx)= \int_{\R^{d}} \sum_{i=1}^{n} \alpha_{i} 1_{A_{i}}(x) \mu(dx)
  \\
  &= \int_{\R^{d}} f(x) \mu(dx).
 \end{align*}
 \item Let $f \geq 0$ be a nonnegative integrable function. Then there exist a sequence of elementary functions $f_{k}=\sum_{i=1}^{n_{k}} \alpha_{i}^{k} 1_{A_{i}^{k}}$ such that $f_{k} \uparrow f$. By monotone convergence and step (2), we get
 \begin{align*}
     \int_{\R^d\times\R^d}f(x)\pi(dx,dy)&=\lim_{k\to\infty}\int_{\R^d\times\R^d}f_k(x)\pi(dx,dy)
     \\
     &=\lim_{k\to\infty}\int_{\R^d}f(x)\mu(dx)
     \\
     &=\int_{\R^d}f(x)\mu(dx)
 \end{align*}
 \item Let $f\in L^1(\R^d)$ arbitrary. Then there exists a decomposition $f=f_+-f_-$ into nonnegative integrable functions $f_+,f_+\geq 0$. By applying Step (3) to the decomposition, equation \eqref{eq:appendix_margins} can be shown for these functions as well.
 \end{enumerate}
\end{proof}

\bibliographystyle{amsplain}
\phantomsection\addcontentsline{toc}{section}{\refname}\bibliography{Bibliography}

\end{document}